\makeatletter \@addtoreset{equation}{section}
\def \one{\ \hbox{I\hskip-.60em 1}}
\begin{document}

\newtheorem{theorem}{Theorem}[section]
\newtheorem{lemma}[theorem]{Lemma}
\newtheorem{cor}[theorem]{Corollary}
\newtheorem{defn}[theorem]{Definition}
\newtheorem{assp}[theorem]{Assumption}
\newtheorem{cond}[theorem]{Condition}
\newtheorem{expl}[theorem]{Example}
\newtheorem{prop}[theorem]{Proposition}
\newtheorem{rmk}[theorem]{Remark}

\newtheorem{remark}{Remark}[section]

\newcommand{\eproof}{\indent\vrule height6pt width4pt depth1pt\hfil\par\medbreak}

\def\proof{\noindent{\it Proof. }}

\title{\itshape A note on Euler approximations for stochastic differential equations with delay}

\author{Istvan Gy\"{o}ngy and Sotirios Sabanis
\thanks{Corresponding author. Email:
s.sabanis@ed.ac.uk \vspace{6pt}} \\
\vspace{6pt} \em{Maxwell Institute for Mathematical Sciences and
School of Mathematics,} \\  \em{University of Edinburgh, Edinburgh
EH9 3JZ, U.K.} }

\maketitle

\begin{abstract}

\bigskip

\noindent An existence and uniqueness theorem for a class of
stochastic delay differential equations is presented, and the
convergence of Euler approximations for these equations is proved
under general conditions. Moreover, the rate of almost sure
convergence is obtained under local Lipschitz and also under
monotonicity conditions.

\noindent {\it Keywords}: Stochastic delay differential equations,
Euler approximations, rate of convergence, local Lipschitz
condition, monotonicity condition.

\noindent {\it AMS subject classifications}: 60H99

\bigskip

\end{abstract}

\date{ }

\maketitle

\section{Introduction}

Stochastic delay differential equations (SDDEs) play an important
role in understanding and modelling many real world phenomena for
which the principle of causality does not apply. One could refer to
\cite{Mao-Yuan-Zou}, \cite{Elsanosi-Oksendal-Sulem},
\cite{McWilliams-Sabanis} and \cite{Arriojas_et_al} for applications
in biology, ecology, economics and finance to name a few, without,
of course, exhausting the long list of the existing literature on
the subject matter. It is important therefore to determine precisely
under which conditions one obtains a unique solution for a delay
system and, moreover, to study the convergence of suitable numerical
schemes. To this end, we employ techniques from the theory of
stochastic differential equations (SDEs) with random coefficients so
as to determine the conditions for uniqueness and existence of
solutions of delay models. Furthermore, we investigate the
convergence properties of Euler schemes that are used to approximate
the aforementioned models.

\noindent Strong discrete-time approximations of SDDEs (in
$L^p$-sense) have been studied by several authors, including
\cite{Kuchler-Platen}, \cite{Baker-Buckwar}, \cite{Mao-Sabanis} and
\cite{HU_et_al} amongst others. Moreover, in recent years, new
findings appeared in the direction of week approximations, see for
example \cite{Buckwar-et-al}.

\noindent Our reason for presenting here results on two types of
convergence, almost sure and in probability, for numerical schemes
of delay models is twofold.

\noindent First we contribute to the understanding of delay models
by providing new results while imposing essentially weaker
conditions on the smoothness of the coefficients in comparison with
the current literature, see for example \cite{Baker-Buckwar},
\cite{Buckwar-et-al}, \cite{Kuchler-Platen},
\cite{Kloeden-Neuenkirch} and \cite{Mao-Sabanis} and the references
therein. The convergence of Euler approximations is proved under
local monotonicity condition, which is much weaker than the local
Lipschitz condition that appears in \cite{Mao-Sabanis}. Moreover, no
smoothness condition on the initial data, on the delay function and
on the drift and diffusion coefficients in the delay argument are
assumed in order to obtain the convergence in probability. The main
result of \cite{Mao-Sabanis}, Theorem 2.1, states convergence in
mean square. It should be noted, that under condition $(H_3)$ used
in \cite{Mao-Sabanis}, our convergence results clearly imply
convergence in mean square as well. In addition, under local
monotonicity condition we present the almost sure rate of
convergence of Euler approximations whereas, Theorem 2.5 in
\cite{Kloeden-Neuenkirch} requires global Lipschitzness.

\noindent Second, we facilitate the development of the theory with
regards to the understanding of quantitative and qualitative
characteristics of solutions of delay equations. As an example, one
may consider the study of different types of stability (almost sure
asymptotic, exponential,  mean square etc) for solutions of such
models. This is an area which has attracted significant attention in
recent years, see for example \cite{Mao_et_al},
\cite{Higham_Mao_Yuan}, \cite{Wu-Mao-Szpruch} and the references
therein.

\noindent Finally, we note that although the authors in
\cite{Renesse-Scheutzow} provide an existence and uniqueness theorem
for stochastic functional differential equations, a more general
class of SDDEs than ours, their conditions are stronger in
comparison with Theorem \ref{SDDE theorem} below. The reason for
this is that we do not require any smoothness of the drift and
diffusion coefficients in the arguments corresponding to the delays.

\noindent We conclude this section by introducing some basic
notation. Let $xy$  be the scalar product of vectors $x$, $y \in
\mathbb{R}^d$ and $|x|$ be the length of $x$. Moreover, if $g \in
\mathbb R^{d\times m}$ is a matrix, then let $g^T$ and $|g|$ denote
the transpose of $g$ and the Hilbert–-Schmidt norm respectively,
i.e. $|g|=\sqrt{\mbox{tr}(gg^T)}$. In addition, let $[x]$ denote the
integer part of the real number $x$. Finally, let $\mathcal P$ and
${\mathcal B}(V)$ denote the predictable $\sigma$-algebra on
$\mathbb R_+\times\Omega$ and the $\sigma$-algebra of Borel sets of
topological spaces $V$ respectively.

\section{Main Results}

\noindent Let $\beta(t, y_1, \ldots, y_k, x)$ and $\alpha(t, y_1,
\ldots, y_k, x)$ be ${\mathcal B}(\mathbb R_+)\otimes {\mathcal
B}(\mathbb R^{d\times k}) \otimes {\mathcal B}(\mathbb
R^d)$-measurable functions with values in $\mathbb R^d$ and $\mathbb
R^{d\times m}$, respectively. Consider the stochastic delay
differential equation
\begin{align}\label{SDDE_1}
dX(t) & = \beta(t, X(\delta_1(t)), \ldots, X(\delta_k(t)), X(t))dt +
\alpha(t, X(\delta_1(t)), \ldots, X(\delta_k(t)), X(t))dW_t, \mbox{ } \\
X(t) & = \xi(t), \qquad \forall t\in[-C,0] \nonumber
\end{align}
on a fixed probability space $(\Omega, \mathbb{F}, \mathbb{P})$,
equipped with a right-continuous filtration
$\mathbb{F}:=\{\mathcal{F}_t\}_{t\ge 0}$ and an $m$-dimensional
Wiener martingale $W:=\{W_{t}\}_{t\geq0}$, where
$\xi:=\{\xi(t)\}_{t\in[-C,0]}$ is a continuous process which is
$\mathcal{F}_0$-measurable for every $t\in[-C,0]$ for a fixed
constant $C>0$,  and $\delta_i(t)$ is an increasing function of $t$
such that $-C\le\delta_i(t)\le [\tfrac{t}{\tau}]\tau$ for some
positive constant $\tau$ and $1\le i\le k$. Note that two popular
cases for $\delta_i$  are included here. These are the fixed delay
case, $\delta_i(t)= t-\tau$, and $\delta_i(t)=
[\tfrac{t}{\tau}]\tau$ which appear in many applications, see for
example \cite{Arriojas_et_al}.

\noindent Fix a constant $T>0$. Let $\mathbb L$ denote the set of
nonnegative integrable functions on $[0,T]$ and $y:=(y_1, \ldots,
y_k)$. Consider the following conditions:
\begin{itemize}
\item[($C_1$)] The function $\beta(t, y, x)$
 is continuous in $x$ for any $t$ and $y$.

\item[($C_2$)] For every $R>0$, there exists a $K_R\in \mathbb L$
such that for all $t\in[0,T]$
$$
\sup_{|x|\le R}\sup_{|y|\leq R}|\beta(t, y, x)| \leq K_R(t)
$$
\item[($C_3$)] For every $R>0$, there exists a $L_R\in\mathbb L$
such that
\begin{equation}
2(x-z)(\beta(t, y, x) - \beta(t, y, z))+
|\alpha(t, y, x) - \alpha(t, y, z)|^2
 \le L_R(t)|x-z|^2,  \nonumber
\end{equation}
for $t\in[0,T]$ and  $|x|, |y|, |z| < R$
\item[($C_4$)]  For any $R>0$, there exists a $M_R\in\mathbb L$ such that
\begin{equation}
2x\beta(t, y, x)+ |\alpha(t, y, x)|^2 \le M_R(t)(1+|x|^2)  \nonumber
\end{equation}
for all $t\in[0,T]$, $x\in\mathbb R^d$ and $|y|\leq R$.
\end{itemize}

\begin{remark} \label{remark}
Note that conditions ($C2$) and ($C4$) imply the existence of a
$K_R\in\mathbb L$ such that
$$
\sup_{|x|\le R}\sup_{|y|\leq R}|\alpha(t, y, x)|^2 \leq K_R(t)
$$
for all $t\in[0,T]$.
\end{remark}

\begin{theorem} \label{SDDE theorem}
Let us assume that conditions ($C_1$)-($C_4$) hold, then there
exists a unique process $\{X(t)\}_{t\in[0, T]}$ that satisfies
equation (\ref{SDDE_1}).
\end{theorem}

\noindent For $n\ge 1$, consider the following Euler scheme for
equation (\ref{SDDE_1})
\begin{equation} \label{EulerSDDE}
dX_n(t)= \beta(t, Y_n(t), X_n(\kappa_n(t)))dt + \alpha(t, Y_n(t) ,
X_n(\kappa_n(t)))dW_t,
\end{equation}
for $t\in[0,T]$, where $X_n(t)=\xi(t)$ on $[-C,0]$, $Y_n(t):=
(X_n(\delta_1(t)), \ldots, X_n(\delta_k(t)))$ and
$\kappa_n(t):=[nt]/n$. Note that if ($C_2$) and Remark \ref{remark}
hold, then (2.2) is well-defined.  In addition, consider the
condition below:
\begin{itemize}
\item[($C_5$)] The functions $\beta(t, y, x)$ and $\alpha(t, y,
x)$ are continuous in $y$ uniformly in $x$ from compacts, i.e. for
every $R>0$ and $t\in[0,T]$,
$$
\sup_{|x|\le R}[|\beta(t, y, x) - \beta(t, y^{'}, x)| + |\alpha(t,
y, x) - \alpha(t, y^{'}, x)|] \to 0 \quad \mbox{as  } y\to y^{'}.
$$
\end{itemize}

\begin{theorem}\label{TheoremEulerSDDE}
Let us assume that conditions ($C_1$)-($C_5$) hold. Consider
equation (\ref{SDDE_1}) and the corresponding Euler scheme defined
by (\ref{EulerSDDE}). Then
$$
\sup_{t\le T}|X_n(t)-X(t)| \xrightarrow[]{\mathbb{P}}  0 \qquad
\mbox{as } n\to \infty.
$$
\end{theorem}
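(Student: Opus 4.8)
The plan is to prove the convergence by induction over the successive intervals $I_j:=[j\tau,(j+1)\tau]$, $j=0,1,\dots$, exploiting the structural fact that $\delta_i(t)\le[\tfrac t\tau]\tau=j\tau$ for $t\in[j\tau,(j+1)\tau)$. Thus on $I_j$ the delayed arguments $Y(t)=(X(\delta_1(t)),\dots,X(\delta_k(t)))$ and $Y_n(t)$ depend only on the values of $X$, resp. $X_n$, on $[-C,j\tau]$; in particular on the first interval $[0,\tau)$ one has $Y_n\equiv Y=(\xi(\delta_1(\cdot)),\dots,\xi(\delta_k(\cdot)))$, since both processes coincide with $\xi$ on $[-C,0]$. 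I would treat (\ref{SDDE_1}) on each $I_j$ as an It\^o equation with the random predictable coefficients $b(t,x):=\beta(t,Y(t),x)$ and $\sigma(t,x):=\alpha(t,Y(t),x)$, the delay entering only as an already constructed random input. The induction hypothesis is $\sup_{s\le j\tau}|X_n(s)-X(s)|\xrightarrow{\mathbb{P}}0$; since each $\delta_i$ is increasing with $\delta_i\le j\tau$ on $I_j$, this yields $\sup_{t\in I_j}|Y_n(t)-Y(t)|\xrightarrow{\mathbb{P}}0$, which is exactly the input that condition $(C_5)$ converts into convergence of the frozen coefficients.

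Before the main estimate I would record two a priori facts, uniform in $n$. First, using the coercivity $(C_4)$ together with the same interval induction, one proves the boundedness in probability $\lim_{R\to\infty}\sup_n\mathbb P(\sup_{t\le T}|X_n(t)|\ge R)=0$, the corresponding bound for $X$ following similarly from $(C_4)$. Second, using $(C_2)$ and Remark \ref{remark} to bound $|\beta|$ and $|\alpha|$ on compacts, one shows that for each fixed $R$ the Euler increments $\rho_n(t):=X_n(t)-X_n(\kappa_n(t))$ satisfy $\sup_{t\le T\wedge\sigma_R}|\rho_n(t)|\xrightarrow{\mathbb{P}}0$, where $\sigma_R$ localises $|X_n|,|X|\le R$; this follows from the absolute continuity of $\int K_R$ over intervals of length $1/n$ and a maximal inequality for the stochastic part. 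With $\tau^n_R:=\inf\{t:|X_n(t)|\ge R\text{ or }|X(t)|\ge R\}\wedge T$ these facts reduce everything to a localised estimate, since
\[
\mathbb P\Big(\sup_{t\le(j+1)\tau}|X_n(t)-X(t)|>\varepsilon\Big)\le\mathbb P(\tau^n_R<(j+1)\tau)+\mathbb P\Big(\sup_{t\le(j+1)\tau\wedge\tau^n_R}|X_n(t)-X(t)|>\varepsilon\Big),
\]
where the first term is made small uniformly in $n$ by taking $R$ large, and the second is treated for fixed $R$ below.

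For the core estimate I apply It\^o's formula to $\Gamma(t)|e_n(t)|^2$ on $I_j$, where $e_n:=X_n-X$ and $\Gamma(t):=\exp(-2\int_{j\tau}^tL_R(s)\,ds)$ is a deterministic weight bounded below by a positive constant. Inserting the intermediate coefficients $\beta(s,Y_n,X)$, $\alpha(s,Y_n,X)$, the drift-plus-diffusion integrand splits as
\[
2e_n\cdot\big(\beta(s,Y_n,X_n(\kappa_n))-\beta(s,Y,X)\big)+\big|\alpha(s,Y_n,X_n(\kappa_n))-\alpha(s,Y,X)\big|^2=\mathrm{(i)}+\mathrm{(ii)}+\mathrm{(iii)},
\]
where (i) is the monotone bundle $2(X_n(\kappa_n)-X)\cdot(\beta(s,Y_n,X_n(\kappa_n))-\beta(s,Y_n,X))+|\alpha(s,Y_n,X_n(\kappa_n))-\alpha(s,Y_n,X)|^2$, bounded by $(C_3)$ by $L_R|X_n(\kappa_n)-X|^2\le2L_R|e_n|^2+2L_R|\rho_n|^2$; (ii) $=2\rho_n\cdot(\beta(s,Y_n,X_n(\kappa_n))-\beta(s,Y_n,X))$ is an Euler-mismatch term bounded on the localised set by $C\,K_R(s)|\rho_n(s)|$; and (iii) collects the frozen-coefficient errors $2e_n\cdot(\beta(s,Y_n,X)-\beta(s,Y,X))$, the diffusion cross term and $|\alpha(s,Y_n,X)-\alpha(s,Y,X)|^2$, each dominated on the localised set by an integrable multiple of $K_R$ and tending to $0$ as $Y_n\to Y$ by $(C_5)$. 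The weight $\Gamma$ is chosen precisely so that its drift $-2L_R\Gamma|e_n|^2$ cancels the $2L_R|e_n|^2$ coming from (i); what remains, $\Gamma(2L_R|\rho_n|^2+\mathrm{(ii)}+\mathrm{(iii)})$, contains no $|e_n|^2$ and tends to $0$ in $L^1$ by the increment bound, the generalised dominated convergence theorem, and the induction hypothesis. Dropping the (genuine, localised) martingale and taking expectations therefore gives $\sup_{t}\mathbb E|e_n(t\wedge\tau^n_R)|^2\to0$; feeding the resulting bound $\mathbb E\int_{j\tau}^{(j+1)\tau\wedge\tau^n_R}|e_n|^2\,ds\to0$ back into the quadratic variation of the martingale term shows the latter tends to $0$ in $L^1$, and a Burkholder--Davis--Gundy estimate upgrades this to $\mathbb E[\sup_{t\le(j+1)\tau\wedge\tau^n_R}|e_n(t)|^2]\to0$, closing the induction after finitely many steps.

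The hardest part is the interface between the merely continuous (non-Lipschitz) dependence on the delay variable and the propagation of the error. Because $(C_5)$ supplies no modulus of continuity, one cannot run a single Gronwall inequality on $[0,T]$ that feeds the delayed error $|e_n(\delta_i(t))|$ back into itself; it is essential to use $\delta_i(t)\le[\tfrac t\tau]\tau$ to decouple the intervals and to convert the inductively established smallness of $Y_n-Y$ into coefficient convergence through dominated convergence rather than through a contraction. The second delicate point is that the local conditions $(C_3)$ and $(C_4)$ are stated with a single spatial argument, whereas the Euler scheme evaluates the coefficients at $X_n(\kappa_n(t))$; reconciling this forces the uniform increment and boundedness-in-probability estimates to be established first, and it is also why the martingale term can only be shown to vanish after the non-sup $L^2$ bound has been obtained.
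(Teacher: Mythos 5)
Your induction over the intervals $[j\tau,(j+1)\tau]$, the reinterpretation of the equation on each such interval as an SDE with random predictable coefficients $b(t,x)=\beta(t,Y(t),x)$, $\sigma(t,x)=\alpha(t,Y(t),x)$ (the delay entering only as an already-constructed input), and the use of ($C_5$) plus dominated convergence to turn the inductively established smallness of $Y_n-Y$ into convergence of the frozen coefficients is exactly the paper's architecture for Theorem \ref{TheoremEulerSDDE}. Where you genuinely diverge is in the single-interval engine. The paper reduces that step to Theorem \ref{main}, a generalisation of Krylov's result on Euler schemes for monotone SDEs: it introduces the auxiliary process $H_n$, which solves the \emph{true} equation at the perturbed argument $H_n+p_n$ with $p_n=X_n(\kappa_n(\cdot))-H_n$, shows $\mathbb{E}\int|p_n|\,dt\to0$ before a stopping time, and then invokes Krylov's stability lemma (Lemma 2 of \cite{Krylov1990}) to get $H_n\to H=X$ uniformly in probability. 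You instead run a direct weighted energy estimate on $e_n=X_n-X$, with the exponential weight $\exp(-2\int L_R)$ absorbing the monotonicity constant from ($C_3$) and a Burkholder--Davis--Gundy step upgrading the pointwise second-moment bound to a supremum bound. Your route is more self-contained (it essentially reproves the special case of Krylov's perturbation lemma that is needed), at the cost of redoing that analysis by hand; the decomposition into the monotone bundle, the Euler-mismatch term and the frozen-coefficient error is sound, and the domination of the cross terms by $K_R$ via Remark \ref{remark} makes the dominated-convergence passages legitimate.

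There is, however, one step that fails as written: your first ``a priori fact,'' namely that ($C_4$) alone yields $\lim_{R\to\infty}\sup_n\mathbb{P}(\sup_{t\le T}|X_n(t)|\ge R)=0$ \emph{before} any error estimate. The coercivity in ($C_4$) pairs $x$ with $\beta(t,y,x)$ at the same spatial point, whereas It\^o's formula applied to $|X_n(t)|^2$ produces $2X_n(t)\beta(t,Y_n(t),X_n(\kappa_n(t)))$; the mismatch term $2\bigl(X_n(t)-X_n(\kappa_n(t))\bigr)\beta(t,Y_n(t),X_n(\kappa_n(t)))$ is controlled only through $|\beta|$ evaluated at $X_n(\kappa_n(t))$, which ($C_2$) bounds only on compacts --- that is, only after one already knows that $X_n$ stays bounded. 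The step is therefore circular, and this is precisely the known obstruction to getting a priori bounds for explicit Euler schemes under merely local growth conditions. Fortunately the step is also unnecessary: with your stopping time $\tau^n_R$ defined through both $|X|$ and $|X_n|$, one has $\mathbb{P}(\tau^n_R<T)\le 2\,\mathbb{P}(\sup_{t\le T}|X(t)|\ge R-1)+\mathbb{P}(\sup_{t\le\tau^n_R}|e_n(t)|\ge 1)$, where the last probability is killed by your own localized estimate and the first is small for large $R$ because $X$ is a fixed continuous process. This is exactly the mechanism by which the paper removes the localization (see the treatment of $\mathbb{P}(\tau_n(R)\le t_1)$ in the proof of Theorem \ref{main}, and Lemma \ref{Gyongy-Shmatkov}), so boundedness in probability of $X_n$ comes out as a byproduct rather than going in as an input. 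Reorder your argument accordingly --- prove the localized estimate first, then deduce the tightness of the exit times --- and the proof closes; note also that only the per-$t$ (not uniform-in-$t$) convergence of the increments $X_n(t)-X_n(\kappa_n(t))$ is actually needed, since they enter only through time integrals.
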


\noindent Let $\{X_n\}_{n\ge 1}$  be a sequence of almost surely
finite random variables and $\{a_n\}_{n=1}^{\infty}$ be a positive
numerical sequence. Then
$$
X_n=\mathcal{O}(a_n)
$$
denotes that there exists an almost surely finite random variable
$\zeta$ such that, almost surely,
$$
|X_n| \le \zeta a_n
$$
for any $n\ge 1$. In order to obtain an estimate for the a.s
convergence of the Euler scheme, we consider the following
conditions:
\begin{itemize}
\item[($A_1$)] For every $R>0$, there exists a constant $k_R$
such that, for all $t\in[0,T]$,
\begin{align}
\sup_{|x|\le R}\sup_{|y|\leq R}\Big(|\beta(t, y, x)| + |\alpha(t, y,
x)|\Big) \leq k_R. \nonumber
\end{align}
\item[($A_2$)]
For every $R>0$, there exists a constant $c_R$ such that, for every
$t\in[0,T]$,
\begin{equation}  \label{LipDelay}
|\beta(t,y,x)-\beta(t,y^{'}, x^{'})|\le c_R(|y-y^{'}|+|x-x^{'}|)
\end{equation}
\begin{equation}
|\alpha(t,y,x)-\alpha(t,y^{'},x^{'})|^2\le
c_R(|y-y^{'}|^2+|x-x^{'}|^2) \nonumber
\end{equation}
whenever $|x|,\, |x^{'}|,\,|y|, \, |y^{'}| < R$.
\item[($A_3$)]
For every $R>0$, there exists a constant $c_R$ such that, for every
$t\in[0,T]$,
\begin{align}
2(x-x^{'})\Big(\beta(t,y,x)-\beta(t,y, x^{'})\Big)&\le
c_R|x-x^{'}|^2 \nonumber \\ |\beta(t,y,x)-\beta(t,y^{'}, x)| &\le
c_R|y-y^{'}| \nonumber
\end{align}
whenever $|x|,\, |x^{'}|,\,|y|, \, |y^{'}| < R$.
\end{itemize}

\begin{remark}
Note that conditions ($A1$) and ($A2$) imply that equation
(\ref{SDDE_1}) has a unique (complete) local solution. The same is
true if (\ref{LipDelay}) is replaced by ($A3$).
\end{remark}

\begin{theorem} \label{rateSDDEcase}
Let conditions ($A_1$) and ($A_2$) hold. Assume that equation
(\ref{SDDE_1}) admits a solution $\{X(t)\}_{t\in[0,\,T]}$. Let
$\{X_n(t)\}_{t\in[0,\,T]}$ denote the solution of the Euler scheme
(\ref{EulerSDDE}). Then,
\begin{equation}                                                            \label{estimateSDDE1}
\sup_{t\le T}|X(t) - X_n(t)|=\mathcal{O}(n^{-\gamma}) \quad
\mbox{(a.s.)}
\end{equation}
for every $\gamma<1/2$. Moreover, if one replaces (\ref{LipDelay})
with ($A3$), then (\ref{estimateSDDE1}) holds for every
$\gamma<1/4$.
\end{theorem}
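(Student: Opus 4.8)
The plan is to obtain an $L^{2p}$ estimate for the error on a localised time interval, upgrade it to the almost-sure bound via a Borel--Cantelli argument, and finally remove the localisation using that the assumed solution $X$ is a.s.\ bounded on $[0,T]$. Write $e(t):=X(t)-X_n(t)$, and for $R>0$ introduce the stopping times $\sigma_R:=\inf\{t\le T:|X(t)|\ge R\}\wedge T$ and $\sigma_R^n:=\inf\{t\le T:|X_n(t)|\ge R+1\}\wedge T$, so that on $[0,\rho]$ with $\rho:=\sigma_R\wedge\sigma_R^n$ both processes stay in the ball of radius $R+1$; there ($A_1$)--($A_2$) (or ($A_3$)) become genuine bounds with the constants $k_{R+1},c_{R+1}$. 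A preliminary ingredient used throughout is the one-step increment bound: by ($A_1$) the drift part of $X_n(s)-X_n(\kappa_n(s))$ is of order $n^{-1}$ and, by the Burkholder--Davis--Gundy inequality, the martingale part has $L^{q}$-norm of order $n^{-1/2}$, so that $\mathbb{E}|X_n(s)-X_n(\kappa_n(s))|^{q}=\mathcal{O}(n^{-q/2})$ for every $q\ge1$.

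\emph{Lipschitz case.} Apply It\^o's formula to $|e(t\wedge\rho)|^{2p}$ for a large even power $2p$. Split the drift difference as $\bigl[\beta(s,Y(s),X(s))-\beta(s,Y_n(s),X(s))\bigr]+\bigl[\beta(s,Y_n(s),X(s))-\beta(s,Y_n(s),X_n(s))\bigr]+\bigl[\beta(s,Y_n(s),X_n(s))-\beta(s,Y_n(s),X_n(\kappa_n(s)))\bigr]$, bounded respectively by $c_R|Y(s)-Y_n(s)|$, $c_R|e(s)|$ and $c_R|X_n(s)-X_n(\kappa_n(s))|$ using ($A_2$); the diffusion difference is handled identically through the $\alpha$-inequality. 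Two routine points close the argument: the delay term satisfies $|Y(s)-Y_n(s)|^2\le k\sup_{u\le s}|e(u)|^2$ because $\delta_i(s)\le s$ and $e\equiv0$ on $[-C,0]$, so it folds into the Gronwall term; and after Young's inequality the discretisation term contributes a source of order $|X_n(s)-X_n(\kappa_n(s))|^{2p}$. Applying BDG to the martingale part and Gronwall's lemma then yields $\mathbb{E}\sup_{t\le\rho}|e(t)|^{2p}\le C_{R,p}\,n^{-p}$.

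\emph{Monotone case.} Only the drift must be reconsidered, since under ($A_3$) $\beta$ is merely one-sided Lipschitz in $x$. Here the drift difference is split as $\bigl[\beta(s,Y(s),X(s))-\beta(s,Y_n(s),X(s))\bigr]+\bigl[\beta(s,Y_n(s),X(s))-\beta(s,Y_n(s),X_n(\kappa_n(s)))\bigr]$, the first handled by the $y$-Lipschitz bound in ($A_3$). For the second, paired with $e(s)$, write $e(s)=\bigl(X(s)-X_n(\kappa_n(s))\bigr)+\bigl(X_n(\kappa_n(s))-X_n(s)\bigr)$: the first piece is absorbed by the monotonicity inequality (producing $c_R|e|^2+c_R|X_n(s)-X_n(\kappa_n(s))|^2$), while the second is estimated crudely by boundedness, $|\beta(s,Y_n(s),X(s))-\beta(s,Y_n(s),X_n(\kappa_n(s)))|\le 2k_{R+1}$, times $|X_n(s)-X_n(\kappa_n(s))|$ to the \emph{first} power. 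In the It\^o expansion this residual appears as $|e|^{2p-2}|X_n(s)-X_n(\kappa_n(s))|$, and Young's inequality now yields a source of order $|X_n(s)-X_n(\kappa_n(s))|^{p}$ rather than $|X_n(s)-X_n(\kappa_n(s))|^{2p}$. Since $\mathbb{E}|X_n(s)-X_n(\kappa_n(s))|^{p}=\mathcal{O}(n^{-p/2})$, Gronwall gives the weaker bound $\mathbb{E}\sup_{t\le\rho}|e(t)|^{2p}\le C_{R,p}\,n^{-p/2}$; this linear appearance of the increment is exactly the origin of the loss from $1/2$ to $1/4$.

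\emph{From moments to the a.s.\ rate, and delocalisation.} Fixing $\gamma$ below the relevant threshold, Chebyshev's inequality gives $\mathbb{P}(\sup_{t\le\rho}|e(t)|>n^{-\gamma})\le C_{R,p}\,n^{p(2\gamma-1)}$ in the Lipschitz case and $C_{R,p}\,n^{p(2\gamma-1/2)}$ in the monotone case; choosing $p$ large makes the series over $n$ summable, so by Borel--Cantelli, a.s.\ $\sup_{t\le\rho}|e(t)|\le n^{-\gamma}$ for all large $n$. The delocalisation is the delicate step: once $\sup_{t\le\rho}|e(t)|<1$ one has $|X_n(t)|\le|X(t)|+|e(t)|<R+1$ on $[0,\rho]$, so $X_n$ has not exited and $\sigma_R^n>\sigma_R$, whence $\rho=\sigma_R$ and the rate in fact holds up to $\sigma_R$. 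Since $X$ is continuous, $\sigma_R=T$ for every $R$ exceeding the a.s.\ finite random bound $\sup_{t\le T}|X(t)|$, so on $\bigcup_R\{\sigma_R=T\}$, a set of full measure, the estimate holds up to $T$; collecting the a.s.\ finite constants over $R\in\mathbb{N}$ produces a single a.s.\ finite $\zeta$ with $\sup_{t\le T}|e(t)|\le\zeta\,n^{-\gamma}$ for all $n$, which is the assertion $\mathcal{O}(n^{-\gamma})$. I expect the main obstacles to be this interplay between the localisation and the blow-up of $C_{R,p}$ in $R$, together with the careful bookkeeping of the discretisation term that distinguishes the Lipschitz and monotone rates.
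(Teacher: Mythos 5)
Your argument is correct in outline, but it takes a genuinely different route from the paper. The paper never works with moments of the error at all: it proceeds by induction over the delay subintervals $[i\tau,(i+1)\tau]$, freezing the delayed arguments into random coefficients $b(t,x)=\beta(t,Y(t),x)$, $b_n(t,x)=\beta(t,Y_n(t),x)$, so that the delay error $|Y(t)-Y_n(t)|$ enters only as the coefficient perturbation $M_{Rn}$ of condition ($E_3$), controlled by the inductive hypothesis; the rate on each subinterval then comes from Theorem \ref{TheoremEulerRateRandom}, whose engine is the pathwise Lemma \ref{lemma 1.1.9.11} (It\^o's formula for $(\phi_n(t)|Z_n(t)|^2)^r$ with the exponential weight $\phi_n=\exp(-(2r+1)\int_0^tL_n)$, the Lenglart-type domination Lemma \ref{Gyongy-Krylov lemma}, Young's inequality and Borel--Cantelli). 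That machinery is built to tolerate Lipschitz constants and perturbation bounds that are merely a.s.\ finite random variables, which is exactly what survives the localisation by the $\mathcal F_{i\tau}$-measurable events $\Omega_l=\{\sup_n\sup_t|Y_n(t)|\le l\}$. You instead run a single global-in-time Gronwall estimate on $\mathbb E\sup_{t\le\rho}|e(t)|^{2p}$, folding the delay into the supremum via $\delta_i(s)\le s$, and then Chebyshev plus Borel--Cantelli. This is more elementary and avoids the induction entirely, but it is less robust: it requires the constants to be deterministic after localisation, and your stopping times as written do not yet guarantee this, because the radii in ($A_1$)--($A_2$) must also dominate the $y$-arguments $Y(s)$, $Y_n(s)$, whose components include the random initial segment $\xi(\delta_i(s))$ for $\delta_i(s)<0$. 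You therefore need to intersect with the $\mathcal F_0$-measurable event $\{\sup_{[-C,0]}|\xi|\le R\}$ (and note that $\delta_i(s)\le s\le\rho$ bounds the nonnegative delay arguments), exactly the role played by the paper's $\Omega_l$; with that routine addition, and keeping the ($A_2$) bound on $\alpha$ in the monotone case (only the $\beta$-inequality \eqref{LipDelay} is replaced by ($A_3$), as you correctly assume), your estimates go through. Your accounting of the $1/2$ versus $1/4$ dichotomy --- the one-step increment appearing linearly rather than quadratically in the monotone case --- coincides with the mechanism in the paper's treatment of ($E_4$). What the paper's route buys is the intermediate Theorem \ref{TheoremEulerRateRandom} for genuinely random coefficients, which your moment method would not deliver directly; what your route buys is a shorter, self-contained proof that bypasses Lemmas \ref{Gyongy-Krylov lemma} and \ref{lemma 1.1.9.11}.
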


\begin{remark}
Note that without loss of generality, it is assumed henceforth that
$T$ is a multiple of $\tau$. To see this, one considers equation
(\ref{SDDE_1}) for every $t\le T^{'}$, where $T^{'}= N\tau \ge T$
and $N$ is a positive integer, and observes that all the above
conditions are satisfied when $\beta$ and $\alpha$ are replaced by
$\beta \one_{\{t\le T\}}$ and $\alpha\one_{\{t\le T\}}$.
\end{remark}

\section{Existence and Uniqueness}

Let $b(t, x)$ and $\sigma(t, x)$ be $\mathcal P\otimes\mathcal
B(\mathbb R^d)$-measurable functions  with values in $\mathbb R^d$
and $\mathbb R^{d\times m}$ respectively. Let $t_0$ and $t_1$ be any
positive constants such that $0 \le t_0<t_1 \le T$. Let also
$\mathcal A$ denote the set of nonnegative $\mathbb{F}$-adapted
stochastic processes $L=\{L(t)\}_{t\in[0,T]}$ such that
$$
\int_0^T L(t)\,dt<\infty \quad \mbox{(a.s.).}
$$
Consider
\begin{equation}\label{random}
dX(t) = b(t, X(t))\,dt + \sigma(t, X(t))\,dW_t, \mbox{ } \forall \,
t\in [t_0,\, t_1],
\end{equation}
with an initial condition $X(t_0)$ which is an
$\mathcal{F}_{t_0}$-measurable, almost surely finite random
variable. Furthermore, consider conditions
\begin{itemize}
\item[($D_1$)] The function $b(t, x)$ is continuous in $x$ for any $t$ and $\omega$.
\item[($D_2$)] For every $R>0$, there exists $\mathcal{K}_R\in\mathcal A$
such that, almost surely,
\begin{equation}
\sup_{|x|\le R} |b(t,x)| \le \mathcal{K}_R(t) \nonumber
\end{equation}
for any $t\in[t_0,\, t_1]$.
\item[($D_3$)] For every $R>0$, there exists $\mathcal{L}_R\in\mathcal A$
such that, almost surely,
\begin{equation}
2(x-z)(b(t, x) - b(t, z))+ |\sigma(t, x) - \sigma(t, z)|^2
 \le \mathcal{L}_R(t)|x-z|^2 \nonumber
\end{equation}
for any $t\in[t_0,\, t_1]$ and  $|x|,|z| < R$.
\item[($D_4$)] There exists $\mathcal{M}\in\mathcal A$ such
that, almost surely,
\begin{equation}
2xb(t, x)+ |\sigma(t, x)|^2 \le \mathcal{M}(t)(1+|x|^2) \nonumber
\end{equation}
for every $t\in[t_0,\, t_1]$ and $x\in\mathbb{R}^d$.
\end{itemize}

\noindent The following existence and uniqueness theorem is known
from \cite{Krylov_Istvan1980} and \cite{Krylov1990}.

\begin{theorem} \label{SDEwRC_theorem}
Let us assume that conditions ($D_1$)-($D_4$) hold, then there
exists a unique process $\{X(t)\}_{t\in[t_0, t_1]}$ that satisfies
equation (\ref{random}).
\end{theorem}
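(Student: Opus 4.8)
The plan is to establish uniqueness and existence separately, following the line of argument of \cite{Krylov_Istvan1980} and \cite{Krylov1990}. Since the coefficients are random and only locally controlled, the recurring technical device throughout will be to localise by stopping times of the form $\rho_R := \inf\{t \ge t_0 : |X(t)| \ge R\} \wedge t_1$, on which the integrals of $\mathcal{K}_R$, $\mathcal{L}_R$ and $\mathcal{M}$ are a.s.\ finite, and then to remove the truncation by letting $R \to \infty$, using that any solution has a.s.\ continuous (hence locally bounded) paths issuing from an a.s.\ finite initial value $X(t_0)$.

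For uniqueness I would take two solutions $X$ and $Y$ with $X(t_0)=Y(t_0)$, set $\rho_R$ as above with $|X|\vee|Y|$ in place of $|X|$, and apply It\^o's formula to $\Phi_R(t) := \exp\big(-\int_{t_0}^{t}\mathcal{L}_R(s)\,ds\big)\,|X(t\wedge\rho_R)-Y(t\wedge\rho_R)|^2$. The drift produced by the finite-variation weight cancels exactly the term generated by condition ($D_3$), leaving a nonnegative local supermartingale started at $0$; after a further localisation turning the stochastic integral into a genuine martingale and taking expectations, one gets $\mathbb{E}\,\Phi_R(t)\le 0$, hence $X=Y$ on $[t_0,\rho_R]$ a.s. Letting $R\to\infty$ so that $\rho_R\uparrow t_1$ yields pathwise uniqueness on $[t_0,t_1]$.

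For existence the strategy is a two-stage approximation. First I would reduce to \emph{bounded} coefficients by composing with a cut-off $x\mapsto\pi_R(x)$ onto the ball of radius $R$ and truncating the initial value on $\{|X(t_0)|\le R\}$; the resulting coefficients inherit ($D_1$) and ($D_3$) and become globally bounded, $b$ by ($D_2$) and $\sigma$ by the analogue of Remark \ref{remark} obtained from ($D_2$) and ($D_4$). For such bounded, continuous, monotone coefficients, existence of a strong solution is precisely the setting of \cite{Krylov1990}: one constructs a solution as a weak limit of Euler (or mollified) approximations and upgrades it to a strong solution via the pathwise uniqueness just established, in the spirit of Yamada--Watanabe; the presence of random predictable coefficients is harmless because all the estimates are carried out $\omega$-wise. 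Second, I would extract the a priori bound from ($D_4$): applying It\^o's formula to $|X^R(t)|^2$ and Gronwall's inequality along stopping times shows that $\sup_{t\le t_1}|X^R(t)|$ does not blow up as $R\to\infty$, so the truncated solutions are consistent and patch into a global solution on $[t_0,t_1]$. The main obstacle is exactly this existence step under the one-sided condition ($D_3$) rather than a Lipschitz condition, where Picard iteration is unavailable and one must rely on the weak-compactness-plus-uniqueness machinery, checking carefully that the ($D_4$) estimate really rules out explosion even though $\mathcal{M}$ is only a.s.\ integrable rather than bounded.
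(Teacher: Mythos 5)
First, note that the paper does not prove this theorem at all: it is stated as a known result and attributed to \cite{Krylov_Istvan1980} and \cite{Krylov1990}, so the comparison is with those sources rather than with an in-paper argument. Your uniqueness half is correct and is exactly the standard argument: under ($D_3$) the drift of $|X(t)-Y(t)|^2$ on $[t_0,\rho_R]$ is dominated by $\mathcal{L}_R(t)|X(t)-Y(t)|^2$, the weight $\exp\big(-\int_{t_0}^t\mathcal{L}_R(s)\,ds\big)$ (a.s.\ positive and finite because $\mathcal{L}_R\in\mathcal A$) absorbs it, a nonnegative local supermartingale started at $0$ vanishes identically, and letting $R\to\infty$ removes the localisation.

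The existence half has a genuine gap. You propose to obtain the solution as a weak limit of Euler or mollified approximations and then upgrade it to a strong solution via Yamada--Watanabe, asserting that the presence of random predictable coefficients is ``harmless because all the estimates are carried out $\omega$-wise.'' This is precisely where the argument breaks: a weak limit lives on a new probability space, and the coefficients $b(t,x,\omega)$, $\sigma(t,x,\omega)$ are $\mathcal P\otimes\mathcal B(\mathbb R^d)$-measurable functions of the \emph{original} $\omega$, so there is no canonical way to transport them; neither the limiting equation nor the Yamada--Watanabe construction is even well posed on the new space. The point of Krylov's proof in \cite{Krylov1990} (and of the semimartingale framework of \cite{Krylov_Istvan1980}) is to avoid weak compactness altogether: one applies the monotonicity condition ($D_3$) to the difference $X_n-X_m$ of two Euler approximations (after controlling $X_n(t)-X_n(\kappa_n(t))$), shows that the approximations are Cauchy in probability on the original space, and passes to the limit in the equation using ($D_1$), ($D_2$) and dominated convergence. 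Your reduction to bounded coefficients via cut-offs and the a priori non-explosion bound from ($D_4$) via It\^o's formula and Gronwall along stopping times are fine and are indeed the remaining ingredients; only the compactness-plus-uniqueness route needs to be replaced by this direct Cauchy-in-probability argument for the construction to go through with random coefficients.
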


\noindent We are ready now to proceed with the proof of the main
theorem of this section.

\noindent {\it Proof of Theorem \ref{SDDE theorem}.} One considers
first the interval $[0,\,\tau)$ and observes that this reduces to
the well-known case of stochastic differential equations (without
delay) where the assumptions ($C_1$)-($C_4$) guarantee the existence
of a unique, continuous  solution (see Theorem \ref{SDEwRC_theorem}
above). One then observes that
$$
X(\tau):= X(0) + \int_0^{\tau} \beta(t,Y(t), X(t))dt +
\int_0^{\tau}\alpha(t, Y(t), X(t))dW_t,
$$
with $Y(t):=(X(\delta_1(t)),\ldots, X(\delta_k(t)))$, is well
defined. Inductively, one may assume that a unique, continuous
solution exists on the interval $[(i-1)\tau,\,i\tau]$, for some
positive integer $i \in \{1, \ldots, N\}$, with the aim to prove
that the same is true on $[i\tau,\,(i+1)\tau]$. One then considers
equation (\ref{random}) with
\begin{equation} \label{random_b_sigma}
b(t,x):=\beta(t, Y(t), x), \quad\sigma(t,x):=\alpha(t, Y(t), x)
\end{equation}
for every $t\in [i\tau,\,(i+1)\tau)$, and with initial condition
$X({i\tau})$ which is an $\mathcal{F}_{i\tau}$-measurable, almost
surely finite random variable. One immediately observes that $b(t,
x)$ and $\sigma(t, x)$ are $\mathcal P\otimes\mathcal B(\mathbb
R^d)$-measurable functions with values in $\mathbb R^d$ and $\mathbb
R^{d\times m}$ respectively as a direct consequence of the
measurability properties of the aforementioned functions $\beta$ and
$\alpha$. Furthermore, one obtains that ($D1$)-($D4$) hold for every
$t\in [i\tau,\,(i+1)\tau)$ due to assumptions ($C1$)-($C4$). More
precisely, ($D_1$) is a direct consequence of ($C_1$); ($D_2$) is a
consequence of ($C_2$) since $\sup_{i\tau \le t <
(i+1)\tau}X(\delta_j(t))$ is almost surely finite (for $1\le j\le
k$) and $\mathcal{K}_R(t)$ can be given as
$$
\mathcal{K}_R(t) := K_R(t)\one_{\Omega_R} + \sum_{l=R+1}^{\infty}
K_l(t) \one_{\Omega^{'}_l},
$$
where
$$
\Omega_l: = \{\sup_{i\tau \le t < (i+1)\tau}|Y(t)| \le l\} \qquad
\mbox{and} \qquad \Omega^{'}_l:= \Omega_{l+1} \backslash \Omega_l
$$
for integers $l\ge 1$. Similarly, one proves that ($D_3$) is a
consequence of ($C_3$). Clearly, ($D_4$) holds with
$$
\mathcal{M}:= \sum_{l=1}^{\infty}M_l\one_{\Omega^{'}_l} \in \mathcal
A
$$
where $M_l$ is from ($C_4$). Finally, Theorem \ref{SDEwRC_theorem}
is used here so as to obtain a unique solution on
$[i\tau,\,(i+1)\tau)$. Then, one observes that
$$
X((i+1)\tau):= X(i\tau) + \int_{i\tau}^{(i+1)\tau} \beta(t,Y(t),
X(t))dt + \int_{i\tau}^{(i+1)\tau}\alpha(t, Y(t), X(t))dW_t,
$$
is well-defined, and that concludes the induction, and consequently,
the proof is complete. $\phantom0$ \hfill  $\Box$

\section{Convergence in Probability}

For each integer $n\geq1$, let $b_n=b_n(t,x)$ and
$\sigma_n=\sigma_n(t,x)$ be $\mathcal P\otimes\mathcal B(\mathbb
R^d)$-measurable functions  with values in $\mathbb R^d$ and
$\mathbb R^{d\times m}$ respectively. Let $t_0$ and $t_1$ be
positive constants such that $0 \le t_0<t_1 \le T$. Moreover,
consider the following Euler scheme
\begin{equation}    \label{Euler}
dX_n(t)= b_n(t, X_n(\kappa_n(t)))\,dt +
\sigma_n(t,X_n(\kappa_n(t)))\,dW_t, \qquad \forall t\in[t_0,\,t_1],
\end{equation}
where $X_n(t_0)=X_{n0}$ is an $\mathcal{F}_{t_0}$-measurable random
variable and $\kappa_n(t):=[nt]/n$.

\noindent In order to prove Theorem \ref{TheoremEulerSDDE}, first we
present a slight generalisation of a result from \cite{Krylov1990}
on Euler approximations of stochastic differential equations.

\begin{theorem}  \label{main}
Consider the Euler scheme (\ref{Euler}) for equation (\ref{random}).
Let conditions ($D_1$)-($D_4$) hold. Moreover, assume that for every
$R>0$ there exists $L_{nR} \in \mathcal A$ such that
\begin{equation}\label{difference}
\sup_{|x|\le R}[|b_n(t,x)- b(t,x)|+ |\sigma_n(t,x)-\sigma(t,x)|^2]
\le L_{nR}(t) \quad \mbox{(a.s.)}
\end{equation}
and
$$
\int_0^T L_{nR}(t) dt \xrightarrow[]{\mathbb{P}}  0 \qquad \mbox{as
} n\to \infty.
$$
Finally, let $X_{n0} \xrightarrow[]{\mathbb{P}}  X(t_0)$ as $n\to
\infty$. Then
$$
\sup_{t_0 \le t\le t_1}|X_n(t)-X(t)| \xrightarrow[]{\mathbb{P}}  0
\qquad \mbox{as } n\to \infty.
$$

\end{theorem}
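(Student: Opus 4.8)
The plan is to treat (\ref{random}) and its Euler scheme (\ref{Euler}) as an Itô comparison problem and to control the squared error $\phi(t):=|X_n(t)-X(t)|^2$ directly, exploiting the one-sided structure of $(D_3)$ rather than any Lipschitz bound. The first step is an a priori estimate: applying Itô's formula to $|X(t)|^2$ and using $(D_4)$ gives, by a pathwise Gronwall argument with the a.s.-integrable rate $\mathcal M$, that $\sup_t|X(t)|$ is bounded in probability. This lets me introduce, for large $R,\Lambda>0$, a stopping time $\rho=\rho_{n,R,\Lambda}$ at which either $|X(t)|$ or $|X_n(t)|$ first reaches $R$, or one of the a.s.-finite integrals $\int_{t_0}^t(\mathcal L_R+\mathcal K_R)\,ds$ reaches $\Lambda$, truncated at $t_1$. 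For $R,\Lambda$ large, the event that $|X|$ exceeds $R$ or the coefficient integrals exceed $\Lambda$ has probability below any prescribed level uniformly in $n$; the possibility that $X_n$ exits before $X$ will be shown a posteriori to be negligible by bootstrapping the error estimate itself.

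On $[t_0,\rho]$ I would apply Itô's formula to $\phi$ and perform the decomposition that is the heart of the argument. Writing the drift difference as
\begin{align}
b_n(t,X_n(\kappa_n(t)))-b(t,X(t))
&=\big[b_n(t,X_n(\kappa_n(t)))-b(t,X_n(\kappa_n(t)))\big] \nonumber\\
&\quad+\big[b(t,X_n(\kappa_n(t)))-b(t,X_n(t))\big]+\big[b(t,X_n(t))-b(t,X(t))\big], \nonumber
\end{align}
and splitting the diffusion difference analogously, the third brackets combine: pairing the drift part with $2(X_n-X)$ and adding the squared diffusion part yields the monotone quantity bounded by $\mathcal L_R(t)\phi(t)$ from $(D_3)$. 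The first brackets are the coefficient-approximation error governed by (\ref{difference}); the middle brackets are the time-discretisation error. The target is the inequality
$$
\phi(t\wedge\rho)\le \phi(t_0)+\int_{t_0}^{t\wedge\rho}\mathcal L_R(s)\phi(s)\,ds+e_n(t\wedge\rho)+N_{t\wedge\rho},
$$
where $e_n$ collects the approximation and discretisation errors and $N$ is the diffusion martingale.

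To show $e_n\xrightarrow[]{\mathbb{P}}0$ I would use three ingredients. For the approximation error, the hypothesis $\int_0^T L_{nR}\,ds\xrightarrow[]{\mathbb{P}}0$. For the discretisation error, the estimate $\sup_t|X_n(t)-X_n(\kappa_n(t))|\xrightarrow[]{\mathbb{P}}0$ (a standard Euler-increment bound) together with the uniform continuity of $b(t,\cdot)$ (by $(D_1)$) and of $\sigma(t,\cdot)$ (a consequence of $(D_3)$) on the compact ball $\{|x|\le R\}$, dominated convergence being available since the integrands are majorised by the integrable functions from $(D_2)$ and Remark \ref{remark}. For the diffusion cross terms $2\langle\,\cdot\,,\sigma(s,X_n(s))-\sigma(s,X(s))\rangle$, a Cauchy--Schwarz bound against the a priori estimate $\int_{t_0}^{\rho}|\sigma(s,X_n(s))-\sigma(s,X(s))|^2\,ds\le C_R$, which itself follows from $(D_3)$ once $\int\mathcal L_R,\int\mathcal K_R$ are frozen below $\Lambda$.

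The main obstacle is closing the Gronwall estimate in the presence of the stochastic integral $N$, whose quadratic variation does not vanish but is of Gronwall type, $\langle N\rangle_\rho\le \beta_n+\int_{t_0}^{\rho}G(s)\phi(s)\,ds$ with $\beta_n\xrightarrow[]{\mathbb{P}}0$ and $G$ integrable and bounded by $\Lambda$; here $(D_3)$ is used again, now to bound $\phi\,|\sigma(s,X_n)-\sigma(s,X)|^2$ by a constant multiple of $(\mathcal L_R+\mathcal K_R)\phi$. I would resolve this by a stochastic Gronwall inequality: after the localisation has rendered all coefficient integrals bounded, taking expectations and applying the Burkholder--Davis--Gundy inequality lets the $\int G\phi$ contribution of $N$ be absorbed into the left-hand side, giving $E[\sup_{t\le\rho}\phi(t)]\to0$ and hence $\sup_{t\le\rho}|X_n-X|\xrightarrow[]{\mathbb{P}}0$. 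Finally I would remove the localisation: since $\mathbb P(\rho<t_1)$ is uniformly small for large $R,\Lambda$, with the premature exit of $X_n$ controlled by the error estimate just obtained, the convergence passes to the unstopped supremum, yielding $\sup_{t_0\le t\le t_1}|X_n(t)-X(t)|\xrightarrow[]{\mathbb{P}}0$.
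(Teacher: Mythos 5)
Your route is genuinely different from the paper's: you attack $\phi(t)=|X_n(t)-X(t)|^2$ directly by It\^o's formula and a three-way splitting of the coefficient differences, whereas the paper introduces the auxiliary process $H_n(t)=X_{n0}+\int_{t_0}^t b(u,X_n(\kappa_n(u)))\,du+\int_{t_0}^t\sigma(u,X_n(\kappa_n(u)))\,dW_u$, rewrites it as a solution of the original equation perturbed by $p_n=X_n(\kappa_n(\cdot))-H_n$, shows $\mathbb{E}\int_{t_0}^{t_1\wedge\tau_n(R)}|p_n(t)|\,dt\to0$, and then invokes Lemma 2 of Krylov \cite{Krylov1990} --- which is precisely the stability-under-perturbation result for monotone SDEs that your Gronwall step is trying to reprove from scratch. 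Most of your preparatory work is sound: the localisation, the decomposition whose third brackets reassemble into the $(D_3)$ quantity with the diffusion square carrying coefficient exactly $1$, the Cauchy--Schwarz treatment of the diffusion cross terms, and the dominated-convergence argument for the discretisation and approximation errors all match what a correct direct proof would need.

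The gap is in the closure of the stochastic Gronwall estimate. After localisation you have, on $[t_0,\rho]$, $\phi(t)\le\phi(t_0)+\int_{t_0}^{t}\mathcal{L}_R(s)\phi(s)\,ds+e_n(t)+N_t$ with $\langle N\rangle_t\le\beta_n+\int_{t_0}^{t}G(s)\phi(s)\,ds$, where $\mathcal{L}_R$ and $G$ are \emph{random} processes that are merely a.s.\ integrable (the localisation bounds $\int G\,ds\le\Lambda$ a.s., not $G$ pointwise). ``Taking expectations and applying BDG'' then fails twice. First, $\mathbb{E}\int\mathcal{L}_R(s)\phi(s)\,ds$ cannot be dominated by $\int\lambda(s)\,\mathbb{E}[\sup_{u\le s}\phi(u)]\,ds$ for any deterministic $\lambda$, so no deterministic Gronwall lemma applies to $u(t)=\mathbb{E}\sup_{s\le t\wedge\rho}\phi(s)$. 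Second, BDG gives $\mathbb{E}\sup_{s\le t}|N_s|\le C\,\mathbb{E}\beta_n^{1/2}+C\,\mathbb{E}\bigl[(\sup\phi)^{1/2}(\int G\,ds)^{1/2}\bigr]$, and Young's inequality turns the last term into $\tfrac12\mathbb{E}\sup\phi+C'\,\mathbb{E}\int G\,ds$; the remainder $C'\,\mathbb{E}\int G\,ds\le C'\Lambda$ neither vanishes with $n$ nor has Gronwall structure, so nothing is ``absorbed.'' The repair is to kill the drift term pathwise \emph{before} taking expectations, e.g.\ by applying It\^o's formula to $\exp\bigl(-\int_{t_0}^{t}\mathcal{L}_R(s)\,ds\bigr)\phi(t)$, and then to control the surviving local martingale by a fractional-moment domination result such as Lemma \ref{Gyongy-Krylov lemma} --- this is exactly the mechanism deployed in Lemma \ref{lemma 1.1.9.11} for the rate results --- or by citing a stochastic Gronwall inequality valid for random integrable rates. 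As written, the decisive step of your argument does not go through.
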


\begin{proof}
One observes first that conditions ($D_2$) and ($D_4$) together with
(\ref{difference}) imply that for every $R>0$, there exists $N_R(t)
\in \mathcal{A}$ such that
\begin{equation}\label{integrability}
\sup_{|x|\le R}[|b_n(t,x)|+ |\sigma_n(t,x)|^2] \le N_R(t) \quad
\mbox{(a.s.)}
\end{equation}
for every $t\in[t_0,\,t_1]$. By introducing
$$
H_n(t) := X_{n0}+ \int_{t_0}^t b(u, X_n(\kappa_n(u)))du +
\int_{t_0}^t \sigma(u, X_n(\kappa_n(u)))dW_u
$$
and $p_n(t):= X_n(\kappa_n(t)) - H_n(t)$, one obtains that
\begin{equation}\label{H}
H_n(t) = H_n(t_0) + \int_{t_0}^t b(u, H_n(u)+p_n(u))du +
\int_{t_0}^t \sigma(u, H_n(u)+p_n(u))dW_u
\end{equation}
with $H_n(t_0) = X_{n0}$. Furthermore, by introducing $e_n(t):=
X_n(t)-H_n(t)$ and the following stopping time
$$
\tau_n(R):=\inf\{t\ge t_0: |H_n(t)|+|e_n(t)|\ge R/2\}
$$
for any $R>0$, one observes that
$$
|X_n(t)|\le R/2 \qquad \mbox{and} \qquad
|p_n(t)|=|X_n(\kappa_n(t))-H_n(t)|\le R \qquad \mbox{on }
(t_0,\tau_n(R)].
$$
In addition, one calculates that as $n\to\infty$,
\begin{equation} \label{**}
\mathbb{P}(\tau_n(R)\le t_1, \sup_{t_0 \le t\le
\tau_n(R)}|H_n(t)|\le \tfrac{R}{4}) \le \mathbb{P}(\sup_{t_0 \le
t\le \tau_n(R)\wedge t_1}|e_n(t)| \ge \tfrac{R}{4} )\to 0,
\end{equation}
due to (\ref{difference}) and known results on convergence of
stochastic integrals. Moreover, one observes that $p_n(t) =
X_n(\kappa_n(t))-X_n(t) + e_n(t)$, and thus
\begin{equation} \label{*}
\mathbb{E}\int_{t_0}^{t_1 \wedge \tau_n(R)}|p_n(t)|dt  \le
\mathbb{E}\int_{t_0}^{t_1 \wedge \tau_n(R)}|X_n(\kappa_n(t)) -
X_n(t)|dt + \mathbb{E}\int_{t_0}^{t_1 \wedge \tau_n(R)}| e_n(t)|dt.
\end{equation}
By taking into account property (\ref{integrability}) and Lebesgue's
dominated convergence theorem, one  concludes that
\begin{align}
|X_n(\kappa_n(t)) - X_n(t)| \one_{[t_0,\, \tau_n(R)\wedge t_1]} \le
& \one_{[t_0,\, \tau_n(R)\wedge t_1]}\Big|\int_{\kappa_n(t)}^t
b_n(u, X_n(\kappa_n(u)))du \Big| \nonumber \\ & +\one_{[t_0,\,
\tau_n(R)\wedge t_1]}\Big|\int_{\kappa_n(t)}^t \sigma_n(u,
X_n(\kappa_n(u)))dW_u \Big|
\end{align}
converges to 0 in probability for each $t$, since one observes that
\begin{align}
\one_{[t_0,\, \tau_n(R)\wedge t_1]}\int_{\kappa_n(t)}^t |\sigma_n(u,
X_n(\kappa_n(u)))|^2du & \le \int_{\kappa_n(t)\wedge \tau_n(R)\wedge
t_0 }^{t\wedge \tau_n(R)\wedge t_1} |\sigma_n(u,
X_n(\kappa_n(u)))|^2du \nonumber \\ & = \int_{t_0}^{t_1}
\one_{A_n}|\sigma_n(u, X_n(\kappa_n(u)))|^2du \nonumber
\end{align}
converges almost surely to 0 as $n\to \infty$, where
$A_n:=(\kappa_n(t)\wedge \tau_n(R),\,t\wedge \tau_n(R) ] $. Hence
$$
\lim_{n\to\infty}\mathbb{E}\int_{t_0}^{t_1 \wedge
\tau_n(R)}|X_n(\kappa_n(t)) - X_n(t)|dt =
\lim_{n\to\infty}\int_{t_0}^{t_1}\mathbb{E} \one_{(t_0,\,
\tau_n(R)]}|X_n(\kappa_n(t)) - X_n(t)|dt  =  0,
$$
by Lebesgue's dominated convergence theorem, since
$$
|X_n(\kappa_n(t)) - X_n(t)| \le R \qquad \mbox{on }
(t_0,\,\tau_n(R)].
$$
Due to equation (\ref{difference}) and the application of the
dominated convergence theorem
$$
\lim_{n\to\infty}\mathbb{E}\int_{t_0}^{t_1 \wedge \tau_n(R)}|
e_n(t)|dt =0,
$$
which results in
$$
\lim_{n\to\infty}\mathbb{E}\int_{t_0}^{t_1 \wedge
\tau_n(R)}|p_n(t)|dt =0.
$$
Thus, the corresponding conditions of Lemma 2 in Krylov
\cite{Krylov1990} are satisfied and, therefore,
$$
\sup_{t_0 \le t\le t_1}|H_n(t)-H(t)| \xrightarrow[]{\mathbb{P}}  0,
\qquad \mbox{as } n\to \infty,
$$
for some process $\{H(t)\}_{t\in[t_0,\, t_1]}$. Furthermore, one
calculates that for any $\epsilon>0$,
\begin{equation} \label{error}
\mathbb{P}(\sup_{t_0 \le t\le t_1}|e_n(t)|\ge \epsilon )  \le
\mathbb{P}(\sup_{t_0 \le t\le \tau_n(R)\wedge t_1}|e_n(t)|\ge
\epsilon) + \mathbb{P}(\tau_n(R)\le t_1).
\end{equation}
Moreover,
\begin{align}
\mathbb{P}(\tau_n(R)\le t_1) & \le \mathbb{P}(\sup_{t_0 \le t\le
\tau_n(R)\wedge t_1}\{|H_n(t)|+|e_n(t)|\}\ge \tfrac{R}{2}) \nonumber \\
& \le   \mathbb{P}(\sup_{t_0 \le t\le t_1}|H_n(t)|\ge \tfrac{R}{4})
+ \mathbb{P}(\sup_{t_0 \le t\le \tau_n(R)\wedge t_1}|e_n(t)| \ge
\tfrac{R}{4} ), \nonumber
\end{align}
which implies, by taking into account (\ref{**})
\begin{equation} \label{countableR}
\limsup_{n\to\infty}\mathbb{P}(\tau_n(R)\le t_1) \le
\limsup_{n\to\infty}\mathbb{P}(\sup_{t_0 \le t\le t_1}|H_n(t)|\ge
\tfrac{R}{4}) = \mathbb{P}(\sup_{t_0 \le t\le t_1}|H(t)|\ge
\tfrac{R}{4})
\end{equation}
for all $R>0$ apart from countably many. Letting $R=R_k\to\infty$,
for points $R_k$ where (\ref{countableR}) holds, one obtains
$$
\lim_{R_k\to\infty}\limsup_{n\to\infty}\mathbb{P}(\tau_n(R)\le
t_1)=0.
$$
Thus, by letting $n \to \infty$ and then $R\uparrow\infty$ in
(\ref{error}), one further obtains that
$$
\lim_{n\to\infty}\mathbb{P}(\sup_{t_0 \le t\le t_1}|e_n(t)|\ge
\epsilon ) =0.
$$
As a result,
\begin{equation} \label{X_n_to_H}
\sup_{t_0 \le t\le t_1}|X_n(t)-H(t)|\le \sup_{t_0 \le t\le
t_1}|e_n(t)| + \sup_{t_0 \le t\le t_1}|H_n(t)-H(t)|
\xrightarrow[]{\mathbb{P}} 0, \qquad \mbox{as } n\to \infty.
\end{equation}
Furthermore
$$
\int_{t_0}^{t_1} |b_n(u, X_n(\kappa_n(u))) - b(u,H(u))|du
\xrightarrow[]{\mathbb{P}}  0, \qquad \mbox{as } n\to \infty,
$$
since
\begin{equation} \label{b_n_to_b}
\int_{t_0}^{t_1} |b_n(u, X_n(\kappa_n(u))) - b(u,
X_n(\kappa_n(u)))|du \xrightarrow[]{\mathbb{P}}  0, \qquad \mbox{as
} n\to \infty,
\end{equation}
due to (\ref{difference}), and
$$
\int_{t_0}^{t_1} |b(u, X_n(\kappa_n(u))) -
b(u,H(u))|du\xrightarrow[]{\mathbb{P}}  0, \qquad \mbox{as } n\to
\infty,
$$
due to the continuity of $b(t,x)$ in $x$, (\ref{X_n_to_H}), ($D_2$)
and the application of Lebesgue's dominated convergence theorem.
More precisely, equation (\ref{b_n_to_b}) holds since
\begin{align}
\mathbb{P}(\int_{t_0}^{t_1} |b_n(u, X_n(\kappa_n(u))) & -
b(u,X_n(\kappa_n(u))|du > \epsilon) \le \mathbb{P}(\rho_n(R) \le
t_1) + \nonumber
\\& \mathbb{P}(\int_{t_0}^{t_1\wedge\rho_n(R)}  |b_n(u, X_n(\kappa_n(u))) -
b(u,X_n(\kappa_n(u))|du> \epsilon) \nonumber
\end{align}
for any $\epsilon>0$ and $R>0$, where
$$
\rho_n(R) :=\inf\{t\ge t_0: |X_n(t)|\ge R\}.
$$
One then observes that due to (\ref{difference}),
\begin{align}
\mathbb{P}(\int_{t_0}^{t_1\wedge\rho_n(R)}  |b_n(u,
X_n(\kappa_n(u))) & - b(u,X_n(\kappa_n(u))|du> \epsilon) \le
\nonumber
\\ \mathbb{P}(\int_{t_0}^{t_1} \sup_{|x|\le R} |b_n(u, x) & -
b(u,x)|du> \epsilon) \to 0, \quad \mbox{as } n\to\infty. \nonumber
\end{align}
Moreover,
\begin{align}
\mathbb{P}(\rho_n(R) \le t_1) & \le \mathbb{P}(\sup_{t_0 \le t\le t_1}|X_n(t)|\ge R) \nonumber \\
& \le \mathbb{P}(\sup_{t_0 \le t\le t_1}|X_n(t) - H(t)|\ge
\tfrac{R}{2})  + \mathbb{P}(\sup_{t_0 \le t\le t_1}|H(t)|\ge
\tfrac{R}{2}) \nonumber
\end{align}
which yields, due to (\ref{X_n_to_H}), that
$$
\lim_{R\to\infty}\limsup_{n\to\infty}\mathbb{P}(\rho_n(R) \le t_1)
=0.
$$
One similarly proves that
$$
\int_{t_0}^{t_1} |\sigma_n(u, X_n(\kappa_n(u))) -
\sigma(u,H(u))|^2du\xrightarrow[]{\mathbb{P}}  0, \qquad \mbox{as }
n\to \infty.
$$
In other words, the Euler scheme converges in probability to $H(t)$,
uniformly in $t\in[t_0, \,t_1]$, and $H(t)$ satisfies
$$
dH(t) = b(t, H(t))dt + \sigma(t, H(t))dW_t, \mbox{ } \forall
t\in[t_0, \,t_1],
$$
which yields $H(t)=X(t)$ (a.s.) for every $t\in[t_0, \,t_1]$. The
proof is complete. \hfill $\Box$
\end{proof}

\noindent We are ready now to proceed with the proof of the main
result of this section.

\noindent {\it Proof of Theorem \ref{TheoremEulerSDDE}.} We prove
the theorem by showing that
\begin{equation}   \label{aim_proof2}
\sup_{(i-1)\tau \le t\le
i\tau}|X_n(t)-X(t)|\xrightarrow[]{\mathbb{P}}  0 \qquad \mbox{as }
n\to \infty,
\end{equation}
for every $i\in \{1,\ldots, N\}$. For $i=1$, the problem reduces to
the well-known case of stochastic differential equations (without
delay) where the assumptions ($C_1$)-($C_5$) are enough to prove the
result. Furthermore, assume (\ref{aim_proof2}) is true for $i< N$.
Then, as noted in the proof of Theorem \ref{SDDE theorem},
conditions ($C_1$)--($C_4$) imply that ($D_1$)--($D_4$) hold for $b$
and $\sigma$ as defined in (\ref{random_b_sigma}) with $t_0=i\tau$
and $t_1=(i+1)\tau$. Moreover, consider
\begin{equation} \label{random_b_n_sigma_n}
b_n(t,x):=\beta(t, Y_n(t), x), \quad\sigma_n(t,x):=\alpha(t, Y_n(t),
x).
\end{equation}
where $Y_n(t):=(X_n(\delta_1(t)),\ldots, X_n(\delta_k(t)))$. Then,
condition ($C_5$) implies that (\ref{difference}) holds since for
every $t\in [i\tau,\,(i+1)\tau)$ and $R>0$,
$$
\sup_{|x|\le R}[|\beta(t, Y_n(t), x) - \beta(t, Y(t), x)| +
|\alpha(t, Y_n(t), x) - \alpha(t,  Y(t),
x)|^2]\xrightarrow[]{\mathbb{P}}  0 \quad \mbox{as } n\to \infty.
$$
Thus, the application of Lebesgue's dominated convergence theorem,
due to ($C_2$) and Remark \ref{remark}, yields that, for every
$R>0$,
$$
\mathbb{P}(\int_{i\tau}^{(i+1)\tau} \sup_{|x|\le R}[|\beta(t,
Y_n(t), x) - \beta(t, Y(t), x)| + |\alpha(t, Y_n(t), x) - \alpha(t,
Y(t), x)|^2] dt> \epsilon) \to 0,
$$
as $n\to \infty$. Consequently, in light of Theorem \ref{main}, one
concludes that the inductive step is correct and thus the desired
result is obtained. \phantom0 \hfill $\Box$

\section{Rate of convergence}

Let $\{X_n\}_{n\ge 1}$  be a sequence of almost surely finite random
variables and $\{b_n\}_{n=1}^{\infty}$ be a positive numerical
sequence. Then
$$
X_n=o(b_n)
$$
denotes that there exists a sequence of random variables
$\{\eta_n\}_{n\ge 1}$ converging to 0 almost surely, such that
$$
|X_n| \le \eta_n b_n \qquad \text{(a.s.) for any $n\ge 1$}.
$$
A useful lemma follows that originates from the Gy\"{o}ngy and
Krylov \cite{Krylov_Istvan1980}.

\begin{lemma}                                                                     \label{Gyongy-Shmatkov}
Let $X_n:=\{X_n(t)\}_{t\in[0,T]}$ be a cadlag stochastic process
taking values in $\mathbb{R}^k$ for every integer $n\ge1$. Define
$$
\tau_{n\epsilon}=\inf\{t\in[0,T]: |X_n(t)|\ge \epsilon\}, \qquad
X_{n\epsilon}(t) = X_n(t\wedge\tau_{n\epsilon})
$$
for some $\epsilon>0$. Then, the following statements hold:
\begin{itemize}
\item[(i)] If $\sup_{t\in[0,T]}|X_{n\epsilon}(t)| \to 0$ in
probability, then $\sup_{t\in[0,T]}|X_{n}(t)| \to 0$ in probability
as well.
\item[(ii)] If $\sup_{t\in[0,T]}|X_{n\epsilon}(t)| \to 0$ almost surely,
then $\sup_{t\in[0,T]}|X_{n}(t)| \to 0$ almost surely as well.
\item[(iii)] If $\sup_{t\in[0,T]}|X_{n\epsilon}(t)|= \mathcal{O}(a_n)$
for a numerical sequence $0<a_n \to 0$, then
$\sup_{t\in[0,T]}|X_{n}(t)|= \mathcal{O}(a_n)$ as well.
\end{itemize}
\end{lemma}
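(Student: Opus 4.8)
The plan is to exploit that the stopped process $X_{n\epsilon}$ and the original process $X_n$ agree on the whole of $[0,T]$ except on the event that $|X_n|$ reaches the level $\epsilon$, and to show that this exceptional event is itself controlled by $\sup_t|X_{n\epsilon}(t)|$. Write $S_n:=\sup_{t\in[0,T]}|X_n(t)|$ and $S_{n\epsilon}:=\sup_{t\in[0,T]}|X_{n\epsilon}(t)|$ (both measurable by right-continuity of the paths), and read $\{\tau_{n\epsilon}\le T\}$ as the event that $|X_n|$ attains $\epsilon$ on $[0,T]$. First I would record two elementary facts. On $\{\tau_{n\epsilon}>T\}$ one has $t\wedge\tau_{n\epsilon}=t$ for every $t\le T$, so $X_{n\epsilon}\equiv X_n$ on $[0,T]$ and hence $S_n=S_{n\epsilon}$ there. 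On $\{\tau_{n\epsilon}\le T\}$, right-continuity of the cadlag path at the hitting time forces $|X_n(\tau_{n\epsilon})|\ge\epsilon$: for $t<\tau_{n\epsilon}$ we have $|X_n(t)|<\epsilon$, while the times witnessing the infimum lie at or to the right of $\tau_{n\epsilon}$, so right-continuity transfers the bound $\ge\epsilon$ to the value at $\tau_{n\epsilon}$. Since $X_{n\epsilon}(\tau_{n\epsilon})=X_n(\tau_{n\epsilon})$ and $\tau_{n\epsilon}\in[0,T]$, this yields
\[
\{\tau_{n\epsilon}\le T\}\subseteq\{S_{n\epsilon}\ge\epsilon\}.
\]
These two observations drive all three parts.

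For (i), fix $\delta>0$. Splitting according to whether $|X_n|$ reaches $\epsilon$ and using $S_n=S_{n\epsilon}$ on $\{\tau_{n\epsilon}>T\}$ together with the containment above,
\[
\mathbb{P}(S_n\ge\delta)\le\mathbb{P}(S_{n\epsilon}\ge\delta)+\mathbb{P}(\tau_{n\epsilon}\le T)\le\mathbb{P}(S_{n\epsilon}\ge\delta)+\mathbb{P}(S_{n\epsilon}\ge\epsilon).
\]
Both terms tend to $0$ by hypothesis, so $S_n\to0$ in probability.

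Parts (ii) and (iii) I would argue pathwise on the almost sure event carrying the assumed convergence of $S_{n\epsilon}$. In (ii), since $S_{n\epsilon}(\omega)\to0$ there is $N(\omega)$ with $S_{n\epsilon}(\omega)<\epsilon$ for $n\ge N(\omega)$; by the contrapositive of the containment this gives $\tau_{n\epsilon}(\omega)>T$, whence $S_n(\omega)=S_{n\epsilon}(\omega)$ for all $n\ge N(\omega)$ and therefore $S_n(\omega)\to0$. In (iii) the hypothesis provides an almost surely finite $\zeta$ with $S_{n\epsilon}\le\zeta a_n$ for every $n$; as $a_n\to0$ and $\zeta<\infty$ a.s., one has $\zeta a_n<\epsilon$ for $n\ge N(\omega)$, so exactly as before $S_n=S_{n\epsilon}\le\zeta a_n$ for such $n$. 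The only extra point is to absorb the finitely many indices $n<N(\omega)$: each $S_n(\omega)$ is finite because a cadlag path on $[0,T]$ is bounded, so
\[
\zeta'(\omega):=\max\Big(\zeta(\omega),\ \max_{1\le n<N(\omega)}S_n(\omega)/a_n\Big)
\]
is almost surely finite and satisfies $S_n\le\zeta'a_n$ for every $n$, i.e. $S_n=\mathcal{O}(a_n)$.

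The single genuinely analytic step is the verification that $|X_n(\tau_{n\epsilon})|\ge\epsilon$ on $\{\tau_{n\epsilon}\le T\}$, which is precisely where the cadlag assumption enters: without right-continuity the path might exceed $\epsilon$ only through a left limit, the containment $\{\tau_{n\epsilon}\le T\}\subseteq\{S_{n\epsilon}\ge\epsilon\}$ would break, and the stopped process would no longer record the crossing. Everything else is routine measure-theoretic bookkeeping, the only mild care being the absorption of the initial indices in (iii).
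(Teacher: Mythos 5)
Your proof is correct. There is no internal argument in the paper to compare it against: the paper disposes of this lemma with a one-line reference to Lemma 3.5 of Gy\"{o}ngy and Shmatkov \cite{Istvan_Shmatkov}, so what you have written is precisely the self-contained elementary proof that the citation stands in for. The one step carrying real content is the one you correctly isolate, namely that on $\{\tau_{n\epsilon}\le T\}$ the cadlag property forces $|X_n(\tau_{n\epsilon})|\ge\epsilon$: approximating the infimum by times $t_k\downarrow\tau_{n\epsilon}$ lying in the level set and using right-continuity together with closedness of $\{x:|x|\ge\epsilon\}$ gives the inclusion $\{\tau_{n\epsilon}\le T\}\subseteq\{\sup_{t\le T}|X_{n\epsilon}(t)|\ge\epsilon\}$, and combined with the identity $X_n\equiv X_{n\epsilon}$ on the complementary event this yields (i) via the two-term probability bound and (ii)--(iii) via the pathwise argument. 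Your treatment of the finitely many initial indices in (iii) --- enlarging the random constant using boundedness of cadlag paths on $[0,T]$ and positivity of $a_n$ --- is exactly the right way to close that case, and matches how the $\mathcal{O}(a_n)$ notation is defined in the paper (a single almost surely finite random variable working for \emph{all} $n\ge1$).
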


\begin{proof}
See Lemma 3.5 in Gy\"{o}ngy and Shmatkov \cite{Istvan_Shmatkov}.
\hfill $\Box$
\end{proof}

\begin{lemma} \label{Gyongy-Krylov lemma} Let $T\in[0,\,\infty)$ and let $f:=\{f_t\}_{t\in[0,T]}$ and
$g:=\{g_t\}_{t\in[0,T]}$ be non-negative continuous
$\mathbb{F}$-adapted processes such that, for any constant $c>0$,
$$
\mathbb{E}[f_{\tau}\one_{\{g_0\le c\}}] \le
\mathbb{E}[g_{\tau}\one_{\{g_0\le c\}}]
$$
for any stopping time $\tau\le T$. Then, for any stopping time
$\tau\le T$ and $\gamma\in(0,1)$,
$$
\mathbb{E}[\sup_{t\le\tau}f_t^{\gamma}] \le
\frac{2-\gamma}{1-\gamma} \mathbb{E}[\sup_{t\le\tau}g_t^{\gamma}]
$$

\end{lemma}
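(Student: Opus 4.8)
The plan is to reduce the claim to a weak-type estimate for the suprema and then recover the $\gamma$-th moment by the layer-cake formula. Write $f^{*}:=\sup_{t\le\tau}f_{t}$ and $g^{*}:=\sup_{t\le\tau}g_{t}$, which are well-defined random variables because $f$ and $g$ are continuous and adapted. For $\gamma\in(0,1)$ I would start from
$$\mathbb{E}[\,(f^{*})^{\gamma}\,]=\gamma\int_{0}^{\infty}\lambda^{\gamma-1}\,\mathbb{P}(f^{*}>\lambda)\,d\lambda,$$
and split $\mathbb{P}(f^{*}>\lambda)=\mathbb{P}(f^{*}>\lambda,\,g^{*}>\lambda)+\mathbb{P}(f^{*}>\lambda,\,g^{*}\le\lambda)$. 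The first term is harmless: bounding it by $\mathbb{P}(g^{*}>\lambda)$ and integrating recovers exactly $\mathbb{E}[(g^{*})^{\gamma}]$ through the same layer-cake identity. Everything then hinges on the second term, for which I claim the weak-type bound
$$\lambda\,\mathbb{P}(f^{*}>\lambda,\,g^{*}\le\lambda)\le\mathbb{E}[\,g^{*}\wedge\lambda\,].$$

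To prove this bound I would introduce the mixed hitting time $\theta_{\lambda}:=\inf\{t\le\tau:\ f_{t}>\lambda\ \text{or}\ g_{t}>\lambda\}\wedge\tau$, a stopping time bounded by $\tau\le T$, and apply the hypothesis to $\theta_{\lambda}$ with the constant $c=\lambda$, obtaining $\mathbb{E}[f_{\theta_{\lambda}}\one_{\{g_{0}\le\lambda\}}]\le\mathbb{E}[g_{\theta_{\lambda}}\one_{\{g_{0}\le\lambda\}}]$. This is precisely where the $\mathcal{F}_{0}$-indicator built into the hypothesis is indispensable: it lets me localise to $\{g_{0}\le\lambda\}$, an event that automatically contains $\{g^{*}\le\lambda\}$ while excluding the set $\{g_{0}>\lambda\}$, on which a pure hitting-time argument would lose control. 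I would then partition $\{g_{0}\le\lambda\}$ into $E:=\{f^{*}>\lambda,\,g^{*}\le\lambda\}$, $F:=\{g^{*}>\lambda,\,g_{0}\le\lambda\}$ and $G:=\{f^{*}\le\lambda,\,g^{*}\le\lambda\}$. On $E$ the $g$-trigger never fires, so by continuity $f_{\theta_{\lambda}}\ge\lambda$ while $g_{\theta_{\lambda}}\le g^{*}\wedge\lambda$; on $F$ and $G$ continuity gives $g_{\theta_{\lambda}}\le\lambda$ and $f_{\theta_{\lambda}}\ge0$ (with $\theta_{\lambda}=\tau$ on $G$), so in all three cases $f_{\theta_{\lambda}}-g_{\theta_{\lambda}}\ge\lambda\,\one_{E}-(g^{*}\wedge\lambda)$. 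Multiplying by $\one_{\{g_{0}\le\lambda\}}$, taking expectations, using $E\subseteq\{g_{0}\le\lambda\}$ and the displayed hypothesis yields $0\ge\lambda\,\mathbb{P}(E)-\mathbb{E}[(g^{*}\wedge\lambda)\one_{\{g_{0}\le\lambda\}}]$, which is the claimed weak-type bound.

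With the bound in hand the second term integrates cleanly. Since $\lambda^{\gamma-1}\mathbb{P}(f^{*}>\lambda,\,g^{*}\le\lambda)\le\lambda^{\gamma-2}\,\mathbb{E}[g^{*}\wedge\lambda]$, Tonelli's theorem together with the elementary computation $\gamma\int_{0}^{\infty}\lambda^{\gamma-2}(a\wedge\lambda)\,d\lambda=a^{\gamma}/(1-\gamma)$ (valid exactly for $\gamma\in(0,1)$) gives
$$\gamma\int_{0}^{\infty}\lambda^{\gamma-1}\,\mathbb{P}(f^{*}>\lambda,\,g^{*}\le\lambda)\,d\lambda\le\frac{1}{1-\gamma}\,\mathbb{E}[\,(g^{*})^{\gamma}\,].$$
Adding the two contributions produces $\mathbb{E}[(f^{*})^{\gamma}]\le\bigl(1+\tfrac{1}{1-\gamma}\bigr)\mathbb{E}[(g^{*})^{\gamma}]=\tfrac{2-\gamma}{1-\gamma}\,\mathbb{E}[(g^{*})^{\gamma}]$, which is the asserted inequality; the constant arises transparently as $1$ from the $\{g^{*}>\lambda\}$ regime plus $\tfrac{1}{1-\gamma}$ from the $\{g^{*}\le\lambda\}$ regime. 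All manipulations are legitimate in $[0,\infty]$, so the estimate is trivially true when $\mathbb{E}[(g^{*})^{\gamma}]=\infty$.

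The main obstacle, and the step I would treat most carefully, is the weak-type bound. The difficulty is that $\{g^{*}\le\lambda\}$ is not measurable with respect to the $\sigma$-algebra at the $f$-hitting time, so one cannot simply restrict the hypothesis to it. Using the mixed stopping time $\theta_{\lambda}$ (so that $g$ too is stopped the instant it reaches level $\lambda$) together with the $\mathcal{F}_{0}$-localisation $\{g_{0}\le\lambda\}$ is what circumvents both the future-measurability of $\{g^{*}\le\lambda\}$ and the loss of control coming from large initial values; making the three-region case analysis deliver the clean lower bound $f_{\theta_{\lambda}}-g_{\theta_{\lambda}}\ge\lambda\,\one_{E}-(g^{*}\wedge\lambda)$ is the crux of the argument.
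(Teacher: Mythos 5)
Your proof is correct, and it is essentially the standard argument behind this Lenglart-type domination inequality: the paper itself only cites Krylov's book and Gy\"ongy--Krylov (2003), where the proof proceeds exactly via the weak-type estimate $\lambda\,\mathbb{P}(f^{*}>\lambda,\,g^{*}\le\lambda)\le\mathbb{E}[g^{*}\wedge\lambda]$ obtained from a mixed stopping time, followed by the layer-cake integration that produces the constant $1+\tfrac{1}{1-\gamma}=\tfrac{2-\gamma}{1-\gamma}$. Your case analysis on $E$, $F$, $G$ and the use of the $\mathcal{F}_{0}$-localisation $\{g_{0}\le\lambda\}$ are sound, so nothing further is needed.
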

\begin{proof}
See \cite{Krylov_book} and also Gy\"{o}ngy and Krylov
\cite{Krylov_Istvan2003}. \hfill $\Box$
\end{proof}

\noindent The proof of the following lemma is an easy exercise left
for the reader.

\begin{lemma}                                                          \label{easy}
Let $X_n=\{X_n(t)\}_{t\in[0,T]}$ be a cadlag stochastic process
taking values in $\mathbb{R}^k$ for every integer $n\ge1$, and let
$\{a_n\}_{n=1}^{\infty}$ be a positive numerical sequence. Assume
there exists a sequence of stopping times
$\{\tau_R\}_{R=1}^{\infty}$, such that
$\lim_{R\to\infty}P(\tau_R<T)=0$, and for each $R$
$$
\sup_{t\le T}|X_n(t\wedge\tau_R)|= \mathcal{O}(a_n).
$$
Then
$$
\sup_{t\le T}|X_n(t)| = \mathcal{O}(a_n).
$$
\end{lemma}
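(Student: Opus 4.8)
The plan is to unwind the definition of $\mathcal{O}(a_n)$ and then glue together the localised bounds on the events where the clock $\tau_R$ has not run out before time $T$. By hypothesis, for each fixed $R$ there is an almost surely finite random variable $\zeta_R$ with $\sup_{t\le T}|X_n(t\wedge\tau_R)|\le \zeta_R a_n$ holding simultaneously for all $n\ge 1$, off a single $\mathbb{P}$-null set $E_R^c$. First I would set $E:=\bigcap_{R\ge 1}E_R$; being a countable intersection of full-measure sets it still has full measure, and on $E$ all of these bounds hold at once.

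The key observation is that on the event $\Omega_R:=\{\tau_R\ge T\}$ one has $t\wedge\tau_R=t$ for every $t\le T$, hence $X_n(t\wedge\tau_R)=X_n(t)$ there, so that $\sup_{t\le T}|X_n(t)|=\sup_{t\le T}|X_n(t\wedge\tau_R)|\le \zeta_R a_n$ on $\Omega_R\cap E$. Because $\mathbb{P}(\tau_R<T)\to 0$, we have $\mathbb{P}(\Omega_R)\to 1$, and consequently $\Omega^{*}:=\bigcup_{R\ge1}\Omega_R$ has full measure, since its measure dominates $\mathbb{P}(\Omega_R)$ for every $R$.

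I would then paste these estimates into a single random variable. Disjointify the cover by putting $B_1:=\Omega_1$ and $B_R:=\Omega_R\setminus\bigcup_{R'<R}\Omega_{R'}$ for $R\ge 2$, so that the $B_R$ are pairwise disjoint with $\bigcup_{R\ge1} B_R=\Omega^{*}$, and define
$$
\zeta:=\sum_{R\ge 1}\zeta_R\,\one_{B_R}.
$$
On the full-measure set $E\cap\Omega^{*}$ each $\omega$ lies in exactly one $B_R\subseteq\Omega_R$, so $\zeta=\zeta_R$ there, and the observation above yields $\sup_{t\le T}|X_n(t)|\le \zeta a_n$ for every $n$; moreover $\zeta$ is finite on this set because each $\zeta_R$ is. This is precisely the assertion $\sup_{t\le T}|X_n(t)|=\mathcal{O}(a_n)$.

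The only point requiring care --- and the reason the statement is not completely immediate --- is that the stopping times $\tau_R$ are not assumed to be monotone in $R$, and the random variables $\zeta_R$ may blow up as $R\to\infty$, so one cannot simply pass to a supremum or a limit of the $\zeta_R$. The disjointification handles the lack of monotonicity, while selecting for each $\omega$ the single index $R$ with $\omega\in B_R$ (equivalently, the least $R$ with $\tau_R(\omega)\ge T$) keeps $\zeta$ almost surely finite. Taking the countable intersection $E$ at the outset is what guarantees that the resulting bound holds for all $n$ simultaneously rather than merely for each $n$ separately.
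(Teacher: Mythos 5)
Your proof is correct. The paper leaves this lemma as ``an easy exercise for the reader'' and gives no proof of its own; your argument --- intersecting the countably many full-measure events on which the localized bounds hold, covering $\Omega$ (up to a null set) by the events $\{\tau_R\ge T\}$ on which $X_n(t\wedge\tau_R)=X_n(t)$ for $t\le T$, disjointifying, and patching the $\zeta_R$ into a single almost surely finite random variable $\zeta$ --- is exactly the intended argument and correctly handles the requirement that the bound hold for all $n$ simultaneously.
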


\noindent To formulate our next lemma we consider for each integer
$n\geq1$ an It\^o process $Z_n=\{Z_n(t)\}_{t\in[0,T]}$ with
stochastic differential
$$
dZ_n(t) = f_n(t)dt + g_n(t)dW_t, \qquad t\in [0,\,T],
$$
where $f_n$ and $g_n$ are adapted stochastic processes with values
in $\mathbb{R}^d$ and $\mathbb{R}^{d\times m}$  respectively, such
that almost surely
$$
\int_0^T|f_n(t)|dt<\infty, \qquad \int_0^T|g_n(t)|^2dt<\infty.
$$
\begin{lemma}                                                              \label{lemma 1.1.9.11}
Let $\gamma>0$ be a fixed number and assume that
\begin{equation}                                                                 \label{2.1.9.11}
Z_n(0)= \mathcal{O}(n^{-\kappa})\quad \text{for all
$\kappa<\gamma$},
\end{equation}
and almost surely
\begin{equation} \label{monotonicitylocLipdiff}
\max (Z_n(t)f_n(t), |g_n(t)|^2) \le L_n(t)|Z_n(t)|^2 + \eta_n(t),
\qquad \qquad \mbox{for all } \, t\in [0,\,T],
\end{equation}
where $L_n$ and $\eta_n$ are non-negative adapted processes such
that
\begin{equation}                                                                     \label{conditionL}
\int_0^T L_n(t)dt =o(\ln n)
\end{equation}
and
\begin{equation}                                                                     \label{condition_eta}
\int_0^T \eta_n(t)dt = \mathcal{O}(n^{-2\kappa})\quad \text{for any
$\kappa<\gamma$}.
\end{equation}
Then,
\begin{equation}                                                                        \label{5.1.10.11}
\sup_{t\le T}|Z_n(t)| = \mathcal{O}(n^{-\kappa}) \quad\text{for any
$\kappa<\gamma$.}
\end{equation}
\end{lemma}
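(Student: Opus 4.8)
The plan is to run a stochastic Gronwall argument on $|Z_n(t)|^2$ and then feed the resulting supermartingale comparison into Lemma \ref{Gyongy-Krylov lemma}. First I would apply It\^o's formula to $|Z_n(t)|^2$, obtaining $d|Z_n(t)|^2 = (2Z_n(t)f_n(t)+|g_n(t)|^2)\,dt + 2Z_n(t)g_n(t)\,dW_t$, and bound the drift using (\ref{monotonicitylocLipdiff}): since both $Z_nf_n$ and $|g_n|^2$ are at most $L_n|Z_n|^2+\eta_n$, the drift is at most $3L_n|Z_n|^2+3\eta_n$. To remove the term proportional to $|Z_n|^2$ I would introduce the integrating factor $e^{-A_n(t)}$ with $A_n(t):=3\int_0^t L_n(s)\,ds$ and set $\Phi_n(t):=e^{-A_n(t)}|Z_n(t)|^2$. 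A direct computation then shows $\Phi_n(t)\le G_n(t)+M_n(t)$, where $G_n(t):=|Z_n(0)|^2+3\int_0^t\eta_n(s)\,ds$ is a nonnegative, continuous, increasing, adapted process and $M_n$ is a local martingale with $M_n(0)=0$.

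Next I would verify the hypothesis of Lemma \ref{Gyongy-Krylov lemma} for the pair $f:=\Phi_n$, $g:=G_n$. Because $\Phi_n(\tau)-G_n(\tau)$ equals $M_n(\tau)$ plus a nonpositive (decreasing) term, and because the indicator $\one_{\{G_n(0)\le c\}}$ is $\mathcal F_0$-measurable, optional stopping gives $\mathbb E[\Phi_n(\tau)\one_{\{G_n(0)\le c\}}]\le\mathbb E[G_n(\tau)\one_{\{G_n(0)\le c\}}]$ for every stopping time $\tau\le T$ and every $c>0$. The only care needed here is that $M_n$ is merely a local martingale, so I would first stop along a localizing sequence $\sigma_m\uparrow$ for $M_n$ (e.g. $\sigma_m:=\inf\{t:|M_n(t)|\ge m\}$), apply the comparison to $\tau\wedge\sigma_m$, and pass to the limit by Fatou. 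Lemma \ref{Gyongy-Krylov lemma} with an exponent $p\in(0,1)$ then yields $\mathbb E[\sup_{t\le T}\Phi_n(t)^p]\le\frac{2-p}{1-p}\,\mathbb E[G_n(T)^p]$, since $G_n$ is increasing and so its running supremum is $G_n(T)$.

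To recover an estimate on $Z_n$ itself, I would use that $A_n$ is increasing, whence $\sup_{t\le T}|Z_n(t)|^2\le e^{A_n(T)}\sup_{t\le T}\Phi_n(t)$. Condition (\ref{conditionL}) gives $A_n(T)=o(\ln n)$, so $e^{A_n(T)}=n^{o(1)}$ is sub-polynomial, while (\ref{2.1.9.11}) and (\ref{condition_eta}) give $G_n(T)=|Z_n(0)|^2+3\int_0^T\eta_n=\mathcal O(n^{-2\kappa'})$ for every $\kappa'<\gamma$. Fixing $\kappa<\kappa'<\gamma$, these combine to control $\sup_t|Z_n|^2$ by $n^{o(1)}\cdot\mathcal O(n^{-2\kappa'})$, and the strict gap $\kappa'-\kappa>0$ is exactly what allows the factor $n^{o(1)}$ to be absorbed so that the final rate is $\mathcal O(n^{-\kappa})$.

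The main obstacle is the clash between the expectation-based Lemma \ref{Gyongy-Krylov lemma} and the purely almost-sure (pathwise) nature of the hypotheses (\ref{2.1.9.11}), (\ref{conditionL}), (\ref{condition_eta}): the controlling quantities $G_n(T)$ and $A_n(T)$ carry random $\mathcal O$/$o$ constants and need not be integrable. I would deal with this by localizing --- either truncating on the high-probability events where these random constants are bounded, via Lemma \ref{easy} (or by stopping $Z_n$ at its exit from a ball and invoking Lemma \ref{Gyongy-Shmatkov}(iii)) --- so that the moment bound above acquires deterministic constants, and then upgrading the resulting summable moment/probability estimates to the almost-sure rate by a Borel--Cantelli argument (using that $\sum_n\mathbb E[\,\cdot\,]<\infty$ forces almost-sure convergence). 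Getting this localization to interact cleanly with both the local martingale and the running supremum is the delicate part; everything else is a routine Gronwall-plus-comparison computation.
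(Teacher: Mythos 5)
Your overall architecture is the right one and matches the paper's: an exponential integrating factor built from $\int_0^\cdot L_n$, the comparison Lemma \ref{Gyongy-Krylov lemma}, localization via Lemmas \ref{Gyongy-Shmatkov} and \ref{easy} to turn the random $\mathcal O$/$o$ constants in \eqref{2.1.9.11}, \eqref{conditionL}, \eqref{condition_eta} into deterministic bounds, and a Borel--Cantelli step at the end. However, there is a genuine quantitative gap in the final step. You apply Lemma \ref{Gyongy-Krylov lemma} directly to $\Phi_n=e^{-A_n}|Z_n|^2$ with an exponent $p\in(0,1)$, which (after localization) yields only
$$
\mathbb E\big[\sup_{t\le T}\Phi_n(t)^p\big]\le C\,n^{-2p\kappa'},\qquad p<1 .
$$
Markov's inequality then gives $\mathbb P\big(\sup_t\Phi_n(t)>n^{-2\kappa}\big)\le C\,n^{-2p(\kappa'-\kappa)}$, and summability of this series requires $2p(\kappa'-\kappa)>1$, i.e.\ a gap $\kappa'-\kappa>1/(2p)>1/2$. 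Since $\kappa<\kappa'<\gamma$, this is impossible whenever $\gamma\le 1/2$ --- which is exactly the regime the lemma is used in (Theorem \ref{rateSDDEcase} needs $\gamma$ arbitrarily close to $1/2$, and only $\gamma<1/4$ in the monotone case). The same obstruction defeats the variant where you sum expectations instead of probabilities. So as written, your argument does not close for a general fixed $\gamma>0$.

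The missing idea, and the paper's key move, is to raise the weighted square to a large power \emph{before} invoking the comparison lemma: apply It\^o's formula to $\big(\phi_n(t)|Z_n(t)|^2\big)^r$ with $\phi_n(t)=\exp\big(-(2r+1)\int_0^tL_n\big)$ and $r\ge 2$ arbitrary, obtaining
$$
\mathbb E\big(\phi_n(\tau)|Z_n(\tau)|^2\big)^r\le n^{-2r\kappa}+r(2r+1)\,\mathbb E\int_0^\tau\big(\phi_n|Z_n|^2\big)^{r-1}\eta_n\,dt .
$$
Lemma \ref{Gyongy-Krylov lemma} with exponent $\delta\in(0,1)$ then controls the $r\delta$-th moment of the supremum, and a Young-inequality step absorbs the cross term $\mathbb E\big(\int(\phi_n|Z_n|^2)^{r-1}\eta_n\big)^{\delta}$ into $\tfrac12\mathbb E\sup(\phi_n|Z_n|^2)^{r\delta}+C\,\mathbb E\big(\int\eta_n\big)^{r\delta}$. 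Since $r\delta$ can be made as large as you like, the Borel--Cantelli series $\sum_n n^{2r\delta(\kappa'-\kappa)}$ converges for every $0<\kappa'<\kappa<\gamma$, no matter how small $\gamma$ is. (The price is that the integrating-factor exponent must grow with $r$, which is why the paper's reduction \eqref{condition_L2} carries the factor $n^{\epsilon(2r+1)}$, removed at the end by taking $\epsilon$ small.) Your treatment of the local-martingale issue via a localizing sequence and Fatou is fine; it is only the moment-exponent bookkeeping that needs the $r$-th power device.
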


\begin{proof}
Let $\kappa \in (0, \,\gamma)$ and set $\Omega_R=\{\sup_{n\ge
1}|Z_n(0)|n^{\kappa} \leq R \}$. Note that $\Omega_R$ is
$\mathcal{F}_0$ measurable. Then $\lim_{R\to\infty}P(\Omega_R)=1$ by
condition \eqref{2.1.9.11}, i.e., it is enough to prove
\eqref{5.1.10.11} for almost every $\omega\in\Omega_R$ for each $R$.
Thus by replacing $Z_n$, $f_n$ and $g_n$ with
$R^{-1}\one_{\Omega_R}Z_n$, $R^{-1}\one_{\Omega_R}f_n$ and
$R^{-1}\one_{\Omega_R}g_n$, respectively, we see that without loss
of generality we may assume that almost surely
\begin{equation}                                                                           \label{6.1.10.11}
|Z_n(0)|\leq n^{-\kappa}\quad\text{for all $n\geq1$}.
\end{equation}
Using this assumption we consider the stopping time $\tau_n
:=\inf\{t\ge0: |Z_n(t)|\geq1\}$ to obtain that
\begin{equation}
\sup_{t\leq T}|Z_n(t\wedge\tau_n)| \leq 1, \qquad \text{for all
$n\ge1$}. \nonumber
\end{equation}
Thus, by Lemma \ref{Gyongy-Shmatkov}, replacing $Z_n$, $f_n$, $g_n$,
$L_n$ and $\eta_n$ with $Z_n(\cdot\wedge\tau_n)$,
$f_n\one_{[0,\,\tau_n]}$, $g_n\one_{[0,\,\tau_n]}$,
$L_n\one_{[0,\,\tau_n]}$ and $\eta_n\one_{[0,\,\tau_n]}$
respectively, without loss of generality we may assume
\begin{equation}                                                                           \label{condition_Z}
\sup_{t\leq T}|Z_n(t)| \leq 1, \qquad \text{for all $n\geq1$}.
\end{equation}
Consider for every integer $R\geq1$ the stopping time
$$
\sigma_R :=\inf\{t\ge0: \sup_{n\ge
1}n^{2\kappa}\int_0^t\eta_n(s)ds\ge R^2\}\wedge T
$$
to get
\begin{equation*}
\int_0^{\sigma_R}\eta_n(s)ds \le R^2 n^{-2\kappa}, \qquad \mbox{for
all } n\ge 1.
\end{equation*}
Due to condition \eqref{condition_eta} we have
$\lim_{R\to\infty}P(\sigma_R<T)=0$. Hence by virtue of Lemma
\ref{easy} we need only show \eqref {5.1.10.11} for
$Z_n(\cdot\wedge\sigma_R)$, for each $R$, in place of $Z_n$. Thus
using $R^{-1}Z_n(\cdot\wedge\sigma_R)$,
$R^{-1}f_n\one_{[0,\,\sigma_R]}$, $R^{-1}g_n\one_{[0,\,\sigma_R]}$,
$L_n\one_{[0,\,\sigma_R]}$ and $R^{-2}\eta_n\one_{[0,\,\sigma_R]}$
in place of $Z_n$, $f_n$, $g_n$, $L_n$ and $\eta_n$ respectively,
without loss of generality we may assume
\begin{equation}                                                                        \label{condition_eta2}
\int_0^{T}\eta_n(s)ds \leq n^{-2\kappa}, \qquad \mbox{for all } n\ge
1.
\end{equation}
Introduce finally the stopping times
$$
\rho_N^{\epsilon} :=\inf\{t\ge0: \sup_{n\ge
N}\frac{\int_0^tL_n(s)ds}{\ln n}\ge \epsilon\}\wedge T,
$$
for integers $N\ge2$ and for any (small) $\epsilon>0$. Then
$$
\exp\Big(\int_0^{\rho_N^{\epsilon}}L_n(s)ds\Big) \leq n^{\epsilon},
\qquad \mbox{for all } n\ge N,
$$
which implies that the random variable
$$
\psi^N_{\epsilon}:= \sup_{n\ge
1}n^{-\epsilon}\exp\Big(\int_0^{\rho_N^{\epsilon}}L_n(s)ds\Big)
$$
is almost surely finite and we have
\begin{equation*}
\exp\Big(\int_0^{\rho_N^{\epsilon}}L_n(s)ds\Big) \le
\psi^N_{\epsilon} n^{\epsilon}, \qquad \mbox{for all } n\ge 1.
\end{equation*}
Due to condition \eqref{conditionL}
$\lim_{N\to\infty}P(\rho_N^{\epsilon}<T)=0$. Thus using Lemma
\ref{easy} as before,  we can see that without loss of generality we
may assume that for any small $\varepsilon>0$ there is a finite
random variable $\psi_{\varepsilon}$ such that almost surely
\begin{equation}                                                                             \label{condition_L2}
\exp\Big(\int_0^{T}L_n(s)ds\Big)
 \leq
\psi_{\epsilon} n^{\epsilon}, \qquad \mbox{for all } n\ge 1
\end{equation}
holds. Now we prove the lemma under the additional conditions
\eqref{6.1.10.11} through \eqref{condition_L2}. Set
$$
\phi_n(t):=\exp\big(-(2r+1)\int_0^tL_n(s)ds\big).
$$
Then, for any $r\ge 2$, Ito's formula yields
\begin{align}
d\big(\phi_n(t)|Z_n(t)|^2\big)^r = &
2r\phi^r_n(t)\Big(|Z_n(t)|^{2(r-1)}Z_n(t)dZ_n(t)+
(r-1)|Z_n(t)|^{2(r-2)}|g^T_n(t)Z_n(t)|^2dt \nonumber \\ & +
\frac{1}{2}|Z_n(t)|^{2(r-1)}|g_n(t)|^2dt\Big) +
|Z_n(t)|^{2r}d\phi^r_n(t) \nonumber \\
\le & r\phi^r_n(t)|Z_n(t)|^{2(r-1)} \Big(2Z_n(t)dZ_n(t)+
(2r-1)|g_n(t)|^2dt\Big)  \nonumber \\ & -
r(2r+1)L_n(t)\phi^r_n(t)|Z_n(t)|^{2r}dt \nonumber.
\end{align}
Hence by \eqref{monotonicitylocLipdiff},
$$
d\big(\phi_n(t)|Z_n(t)|^2\big)^r \le
r(2r+1)\phi^r_n(t)|Z_n(t)|^{2(r-1)}\eta_n(t)dt +
2r|Z_n(t)|^{2(r-1)}\phi^r_n(t)Z_n(t)g_n(t)dW_t
$$
on $[0,\,T]$. Thus, for every stopping time $\tau\le T$
$$
\mathbb{E}\big(\phi_n(\tau)|Z_n(\tau)|^2\big)^r \leq
n^{-2r\kappa}+r(2r+1) \mathbb{E}\int_0^{\tau}
\big(\phi_n(t)|Z_n(t)|^2\big)^{r-1}\eta_n(t)dt
$$
Then, one applies Lemma \ref{Gyongy-Krylov lemma} with the
non-negative processes $f$ and $g$ being represented by $f_t:=
\big(\phi_n(t)|Z_n(t)|^2\big)^r$ and
$$
g_t:=  n^{-2r\delta\kappa} + r(2r+1)\int_0^t
\phi^r_n(s)|Z_n(s)|^{2(r-1)}\eta_n(s)ds,
$$
for every $t\in[0,\,T]$, to obtain that, for any $\delta\in
(0,\,1)$,
$$
\mathbb{E}[\sup_{t\le T}\big(\phi_n(t)|Z_n(t)|^2\big)^{r\delta}] \le
Cn^{-2r\delta\kappa}+ C \mathbb{E}\big(\int_0^T
\big(\phi_n(t)|Z_n(t)|^2\big)^{r-1}\eta_n(t)dt\big)^{\delta}.
$$
Hence the application of Young's inequality yields
$$
\mathbb{E}[\sup_{t\le T}\big(\phi_n(t)|Z_n(t)|^2\big)^{r\delta}]
\leq C n^{-2r\delta\kappa}+ \frac{1}{2}\mathbb{E}\sup_{t\le
T}\big(\phi_n(t)|Z_n(t)|^2\big)^{r\delta} + C
\mathbb{E}\Big(\int_0^T\eta_n(t)dt\Big)^{r\delta}.
$$
Thus, due to \eqref{condition_eta2} we have
$$
\sum_n \mathbb{P}\Big(\sup_{t\le
T}\big(\phi_n(t)|Z_n(t)|^2\big)>n^{-2\kappa^{'}}\Big) \le C\sum_n
n^{2r\delta(\kappa^{'}- \kappa)}<\infty
$$
for a sufficiently large $r$, $\delta \in(0,1)$ and any
$\kappa^{'}<\kappa<\gamma$. Here and above, C denotes constants that
depend on $r$ and $\delta$ but are independent of $n$.
Borel-Cantelli lemma then implies that
 for each $\kappa<\gamma$ there is a finite random variable
$\zeta_{\kappa}$ such that almost surely
$$
\sup_{t\le T}\big(\phi_n(t)|Z_n(t)|^2\big) \leq \zeta_{\kappa}
n^{-2\kappa}
$$
for all $n\geq1$. Hence, due to \eqref{condition_L2}
$$
\sup_{t\le T}|Z_n(t)|^{2}
 \leq
 \zeta_{\kappa}n^{-2\kappa}
 \exp\big((2r+1)\int_0^TL_n(t)dt \big)
 \leq
 \psi_{\epsilon}
\zeta_{\kappa}n^{\epsilon(2r+1)-2\kappa}.
$$
By taking $\epsilon$ sufficiently small and $\kappa$ sufficiently
close to $\gamma$, the desired result follows immediately. The proof
is complete. \hfill $\Box$
\end{proof}

\noindent One is then ready to proceed with the calculation of the
rate of convergence for the Euler scheme (\ref{Euler}). Key
conditions are described below.

\begin{itemize}
\item[($E_1$)] For every $R>0$, there exist finite
$\mathcal{F}_{t_0}$-measurable random variable $C_R$ such that,
almost surely,
\begin{equation}                                                            \label{locLip1}
|b(t,x)-b(t,y)|\le C_R|x-y|
\end{equation}
\begin{equation}                                                           
|\sigma(t,x)-\sigma(t,y)|^2\le C_R|x-y|^2
 \nonumber
\end{equation}
for every $t\in[t_0,\,t_1]$ and $|x|,\,|y| < R$.
\item[($E_2$)] For every $R>0$, there exist finite
$\mathcal{F}_{t_0}$-measurable random variables $K_R$ such that,
almost surely,
\begin{equation}                                                            
|b_n(t,x)|\le K_R \qquad \mbox{and} \qquad |\sigma_n(t,x)|\le K_R
\nonumber
\end{equation}
for every $n\ge 1$, $t\in[t_0,\,t_1]$ and $|x| < R$.
\item[($E_3$)] For every $R>0$, there exist adapted processes $M_{Rn}$ such that,
almost surely,
\begin{equation}                                                            
|b(t,x) - b_n(t,x)|^2 \le M_{Rn}(t)  \nonumber
\end{equation}
\begin{equation}                                                            
|\sigma(t,x) - \sigma_n(t,x)|^2 \le M_{Rn}(t) \nonumber
\end{equation}
for every $n\ge 1$, $t\in[t_0,\,t_1]$ and $|x| < R$, and for every
$\gamma<1/2$,
$$
\int_{t_0}^{t_1} M_{Rn}(t) dt =\mathcal{O}(n^{-2\gamma}). \nonumber
$$
\item[($E_4$)] Alternatively to (\ref{locLip1}), there exists a finite $\mathcal{F}_{t_0}$-measurable random variable $C_R$ such that,
almost surely,
\begin{equation} 
2(x-y)\Big(b(t,x)-b(t,y)\Big) \le C_R |x-y|^2 \nonumber
\end{equation}
for every $t\in[t_0,\,t_1]$ and $|x|,\,|y| < R$.
\end{itemize}
\begin{remark} \label{Remark2}
Conditions ($E_2$) and ($E_3$) imply that for every $R>0$, there
exists a process $M_R \in \mathcal{A}$ such that, almost surely,
$$
|b(t,x)|^2 \le M_R(t) \qquad \mbox{and} \qquad |\sigma(t,x)|^2 \le
M_R(t).
$$
for any $t\in[t_0,\,t_1]$ and $|x| < R$. Hence, due to conditions
($E_1$) -- ($E_3$), equation (\ref{random}) has a unique (complete)
local solution. The same is true if (\ref{locLip1}) is replaced by
($E_4$). The existence of a unique (global) solution can be
guaranteed by appropriate assumptions on the growth of $b$ and
$\sigma$, e.g. by ($D_4$).
\end{remark}
\begin{theorem} \label{TheoremEulerRateRandom}
Let conditions ($E_1$)--($E_3$) hold. Assume that equation
(\ref{random}) with initial data $X(t_0)$ admits a solution
$\{X(t)\}_{t\in[t_0,\, t_1]}$. Let $\{X_n(t)\}_{t\in[t_0,\, t_1]}$
denote the solution of the Euler scheme (\ref{Euler}) with initial
data $X_{n0}$ such that
\begin{equation}                                                            \label{initial data}
|X(t_0) - X_{n0}|=\mathcal{O}(n^{-\gamma}) \quad \mbox{(a.s.)}
\end{equation}
for every $\gamma<1/2$. Then
\begin{equation}                                                            \label{estimate1}
\sup_{t_0 \le t\le t_1}|X(t) - X_n(t)|=\mathcal{O}(n^{-\gamma})
\quad \mbox{(a.s.)}
\end{equation}
for every $\gamma<1/2$. Moreover, if one replaces condition
(\ref{locLip1}) with ($E_4$), then (\ref{estimate1}) holds for every
$\gamma<1/4$. In this case, it is sufficient to require that ($E_3$)
and (\ref{initial data}) hold for every $\gamma<1/4$.
\end{theorem}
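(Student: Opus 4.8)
The plan is to set $Z_n(t) := X(t) - X_n(t)$ and to check that it satisfies the hypotheses of Lemma \ref{lemma 1.1.9.11}, from which \eqref{estimate1} follows immediately (after the obvious time-shift $t_0\mapsto 0$). Since $X$ solves \eqref{random} and $X_n$ solves \eqref{Euler}, $Z_n$ is an It\^o process with drift and diffusion
$$
f_n(t) = b(t,X(t)) - b_n(t, X_n(\kappa_n(t))), \qquad g_n(t) = \sigma(t,X(t)) - \sigma_n(t, X_n(\kappa_n(t))),
$$
and $Z_n(t_0) = X(t_0) - X_{n0}$, so assumption \eqref{initial data} is precisely \eqref{2.1.9.11} with $\gamma=1/2$. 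Because $(E_1)$--$(E_3)$ are local in the state variable, I would first localise. Using the continuity of $X$ on $[t_0,t_1]$, the $n$-independent stopping times $\rho_R := \inf\{t\ge t_0 : |X(t)|\ge R\}\wedge t_1$ satisfy $\lim_{R\to\infty}\mathbb{P}(\rho_R<t_1)=0$, so by Lemma \ref{easy} it suffices to treat $Z_n(\cdot\wedge\rho_R)$ for each fixed $R$. On top of this, the internal reduction in Lemma \ref{lemma 1.1.9.11} (equivalently Lemma \ref{Gyongy-Shmatkov}(iii) with $\inf\{t:|Z_n(t)|\ge1\}$) lets me assume $\sup_t|Z_n|\le1$; together with $|X|\le R$ on $[t_0,\rho_R]$ this yields $|X_n(t)|\le R+1$ and $|X_n(\kappa_n(t))|\le R+1$, so all local bounds apply with radius $R'=R+1$.

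The core step is to establish \eqref{monotonicitylocLipdiff}. Writing $\Delta_n(t) := X_n(t) - X_n(\kappa_n(t))$ I would split
$$
f_n = \big[b(t,X) - b(t, X_n(\kappa_n(t)))\big] + \big[b(t, X_n(\kappa_n(t))) - b_n(t, X_n(\kappa_n(t)))\big] =: A + B,
$$
and similarly for $g_n$. The term $B$ (and its diffusion analogue) is controlled by $(E_3)$: $|B|^2\le M_{R'n}(t)$ with $\int M_{R'n}=\mathcal{O}(n^{-2\gamma})$ for every $\gamma<1/2$, so after a Young step it feeds into $\eta_n$. For $A$ I use $X - X_n(\kappa_n(t)) = Z_n + \Delta_n$. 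Under the Lipschitz condition \eqref{locLip1}, $|A|\le C_{R'}|Z_n+\Delta_n|$, whence $Z_nA\le \tfrac{3}{2} C_{R'}|Z_n|^2 + \tfrac{1}{2} C_{R'}|\Delta_n|^2$; the diffusion part is handled identically with the retained Lipschitz bound on $\sigma$, giving $|g_n|^2\le c|Z_n|^2 + c|\Delta_n|^2 + cM_{R'n}$. Thus $L_n$ may be taken equal to the constant $c=c(C_{R'})$, for which $\int_{t_0}^{t_1}L_n\,dt$ is a finite random variable and hence trivially $o(\ln n)$, so \eqref{conditionL} holds, while $\eta_n$ collects the terms $|\Delta_n|^2$ and $M_{R'n}$.

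Everything then hinges on the one-step Euler estimate $\int_{t_0}^{t_1}|\Delta_n(t)|^2\,dt=\mathcal{O}(n^{-2\gamma})$ a.s. for every $\gamma<1/2$. On the localised region $(E_2)$ gives $|b_n|,|\sigma_n|\le K_{R'}$, so $\Delta_n(t)$ is the sum of a drift increment of size $\le K_{R'}/n$ and a martingale increment over an interval of length $\le 1/n$. Conditioning on $\mathcal{F}_{t_0}$ (which is why $K_{R'}$ is taken $\mathcal{F}_{t_0}$-measurable) and applying the Burkholder--Davis--Gundy inequality to the grid increments yields $\mathbb{E}\sup_t|\Delta_n(t)|^{2p}\le C_p K_{R'}^{2p}\,n^{1-p}$, and a Chebyshev plus Borel--Cantelli argument upgrades this to the stated almost sure rate for every $\gamma<1/2$. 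Feeding $\int|\Delta_n|^2=\mathcal{O}(n^{-2\gamma})$ and $\int M_{R'n}=\mathcal{O}(n^{-2\gamma})$ into \eqref{condition_eta} verifies the last hypothesis of Lemma \ref{lemma 1.1.9.11} with $\gamma<1/2$, and removing the two localisations via Lemma \ref{Gyongy-Shmatkov}(iii) and Lemma \ref{easy} gives \eqref{estimate1}.

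For the monotone case I replace \eqref{locLip1} by $(E_4)$, which controls $A$ only one-sidedly through the comparison $X$ versus $X_n(\kappa_n(t))$: writing $Z_nA = (X-X_n(\kappa_n(t)))A - \Delta_nA$, the first term is $\le C_{R'}|Z_n|^2 + C_{R'}|\Delta_n|^2$ by monotonicity, but for the second I can only use $|A|\le 2\sqrt{M_{R'}}$ from Remark \ref{Remark2}, producing the cross term $|\Delta_nA|\le 2|\Delta_n|\sqrt{M_{R'}}$. By Cauchy--Schwarz, $\int_{t_0}^{t_1}|\Delta_n|\sqrt{M_{R'}}\,dt\le(\int|\Delta_n|^2)^{1/2}(\int M_{R'})^{1/2}=\mathcal{O}(n^{-\gamma})$ for every $\gamma<1/2$, that is $\mathcal{O}(n^{-2\kappa})$ only for $\kappa<1/4$. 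This is the genuine obstacle and the sole source of the degraded rate: the diffusion term is still governed by the Lipschitz property of $\sigma$, so $\Delta_n$ enters there as $|\Delta_n|^2$ and costs nothing beyond $\gamma<1/2$, whereas this drift cross term forces \eqref{estimate1} to hold only for $\gamma<1/4$, and correspondingly $(E_3)$ and \eqref{initial data} are needed only for $\gamma<1/4$. Applying Lemma \ref{lemma 1.1.9.11} with this weaker $\eta_n$-bound and undoing the localisations as before completes the proof.
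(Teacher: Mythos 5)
Your proposal is correct and follows essentially the same route as the paper: localise via stopping times so that the local conditions ($E_1$)--($E_3$) apply on a fixed ball, verify the hypotheses of Lemma \ref{lemma 1.1.9.11} for $Z_n=X-X_n$ using the same telescoping of $b-b_n$ and $\sigma-\sigma_n$ together with the one-step Euler increment estimate obtained from BDG/H\"older, Markov and Borel--Cantelli, and in the monotone case isolate exactly the same drift cross term $|X_n(t)-X_n(\kappa_n(t))|\,|b(t,X(t))-b_n(t,X_n(\kappa_n(t)))|$ as the sole cause of the drop from $\gamma<1/2$ to $\gamma<1/4$. The only cosmetic differences are your two-term (rather than three-term) splitting of the drift difference and your use of an $X$-based stopping time $\rho_R$ in place of the paper's $\tau_{n\epsilon}\wedge\tau_R$, neither of which changes the argument.
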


\begin{proof}
Due to Lemma \ref{Gyongy-Shmatkov} and \ref{easy}, it suffices to
prove that
$$
\sup_{t_0 \le t\le t_1}|Z_n(t)|=\mathcal{O}(n^{-\gamma}) \quad
\mbox{(a.s.)}
$$
for every $\gamma<1/2$, where
$$
Z_n(t) :=  X(t\wedge \tau_{n\epsilon R}) - X_n(t\wedge
\tau_{n\epsilon R}),
$$
$$
\tau_{n\epsilon} :=\inf\{t\ge t_0: |X(t) - X_n(t)|\ge \epsilon\},
\qquad \tau_R :=\inf\{t\ge t_0: |X(t)|\ge R-1\}.
$$
and $\tau_{n\epsilon R} = \tau_{n\epsilon} \wedge \tau_R$ for every
$R>0$ and arbitrary $\epsilon \in (0,\, 1)$.

\noindent Moreover, it is enough to prove our result under the
condition that assumption ($E_2$) holds with a constant $L$ instead
of a finite $\mathcal{F}_{t_0}$-measurable random variable $K_R$.
One only needs to replace $b_n$ and $\sigma_n$ with
$b_n\one_{\{K_R\le L\}}$ and $\sigma_n\one_{\{K_R\le L\}}$
respectively, for each (fixed) $R>0$, since
$$
[K_R<L] \in \mathcal{F}_{t_0} \qquad \mbox{and} \qquad
\mathbb{P}(\bigcup_{L=1}^{\infty} [K_R<L])=1.
$$
Then, for $h_n(s):= \sigma_n(s,X_n(\kappa_n(s)))\one_{\{s\le
\tau_{n\epsilon R}\}}$, one has  $|h_n(t)|\le L$ on $[t_0,\, t_1]$
and for $r\ge2$,
\begin{align}
\mathbb{E}\Big(\int_{t_0}^{t_1}\Big|\int^t_{\kappa_n(t)}h_n(s)dW_s\Big|dt\Big)^r
& \le
K \mathbb{E}\Big(\int_{t_0}^{t_1}\Big|\int^t_{\kappa_n(t)}h_n(s)dW_s \Big|^{2r} dt\Big)^{\frac{1}{2}} \qquad \mbox{(H\"{o}lder)} \nonumber \\
& \le K \Big(\int_{t_0}^{t_1}\mathbb{E}\Big|\int^t_{\kappa_n(t)}h_n(s)dW_s \Big|^{2r} dt\Big)^{\frac{1}{2}}  \qquad \mbox {(Jensen)} \nonumber \\
& \le K n^{-\frac{r}{2}} \nonumber
\end{align}
holds, where $K$ denote positive constants which are independent of
$n$. As a result, due to Markov's inequality,
$$
\sum_n\mathbb{P}\Big(\int_{t_0}^{t_1}\Big|\int^t_{\kappa_n(t)}
h_n(s)dW_s\Big|dt>n^{-\gamma}\Big) \le K\sum_n n^{\gamma r -
\frac{r}{2}}<\infty
$$
for a sufficiently large $r$ and $\gamma\in[0,1/2)$, which proves
that
$$
\int_{t_0}^{t_1}\Big|\int^t_{\kappa_n(t)} h_n(s)dW_s\Big|dt=
\mathcal{O}(n^{-\gamma})
$$
by the application of the Borel-Cantelli lemma. Consequently,
\begin{align} \label{kappa}
\int_{t_0}^{t_1}|X_n(t) - X_n((\kappa_n(t))|\one_{\{t\le
\tau_{n\epsilon R}\}} dt & \le
\int_{t_0}^{t_1}|\int^t_{\kappa_n(t)}\one_{\{s\le \tau_{n\epsilon
R}\}}
b_n(s,X_n(\kappa_n(s)))ds|dt \nonumber \\
& + \int_{t_0}^{t_1}|\int^t_{\kappa_n(t)} h_n(s)dW_s|dt \nonumber \\
& = \mathcal{O}(n^{-\gamma})
\end{align}
for any $\gamma\in[0,1/2)$. Furthermore, in order to apply Lemma
\ref{lemma 1.1.9.11}, one defines
$$
f_n(t) := \one_{T_n}[b(t,X(t)) - b_n(t,X_n(\kappa_n(t)))]
$$
and
$$
g_n(t) := \one_{T_n}[\sigma(t,X(t))- \sigma_n(t,X_n(\kappa_n(t)))].
$$
where $T_n:=(t_0,\, \tau_{n\epsilon R}]$. Moreover, one calculates
\begin{align}
Z_n(t)f_n(t)  \le &  |Z_n(t)|\Big(|b(t,X(t)) - b(t,X_n(t))| +
|b(t,X_n(t)) - b(t,X_n(\kappa_n(t)))| \nonumber \\
& + |b(t,X_n(\kappa_n(t))) - b_n(t,X_n(\kappa_n(t)))|\Big)\one_{T_n} \nonumber \\
\le & (C_R+1)|Z_n(t)|^2  +C_R|X_n(t) - X_n(\kappa_n(t))|^2\one_{T_n}
\nonumber
\\ & + |b(t,X_n(\kappa_n(t))) - b_n(t,X_n(\kappa_n(t)))
|^2\one_{T_n}, \nonumber
\end{align}
and,
\begin{align}
|g_n(t)|^2 \le & 3\Big(|\sigma(t,X(t)) - \sigma(t,X_n(t))|^2
+ |\sigma(t,X_n(t)) - \sigma(t,X_n(\kappa_n(t)))|^2 \nonumber \\
& + |\sigma(t,X_n(\kappa_n(t)) - \sigma_n(t,X_n(\kappa_n(t)))|^2
\Big)\one_{T_n} \nonumber
\\ \le &
C_R|Z_n(t)|^2 + C_R|X_n(t) - X_n(\kappa_n(t))|^2 \one_{T_n}
\nonumber
\\  & + |\sigma(t,X_n(\kappa_n(t)) -
\sigma_n(t,X_n(\kappa_n(t)))|^2 \one_{T_n}. \nonumber
\end{align}
which yields
\begin{equation} \label{g^2_n}
\max(Z_n(t)f_n(t), |g_n(t)|^2) \le L_n|Z_n(t)|^2 + \eta_n(t), \qquad
\qquad \mbox{for all } t\in [t_0,\,t_1],
\end{equation}
due to (\ref{kappa}) and ($E_3$), where $L_n:= C_R+1$ and
\begin{align}
\eta_n(t): = \Big(C_R|X_n(t) - X_n(\kappa_n(t))|^2 & +
|b(t,X_n(\kappa_n(t))) - b_n(t,X_n(\kappa_n(t))) |^2 \nonumber \\ &
+  |\sigma(t,X_n(\kappa_n(t)) -
\sigma_n(t,X_n(\kappa_n(t)))|^2\Big)\one_{T_n} \nonumber
\end{align}
satisfy the conditions of Lemma \ref{lemma 1.1.9.11} for any
$\gamma\in[0,1/2)$. Thus, application of Lemma \ref{lemma 1.1.9.11}
yields the desired result (\ref{estimate1}) for any
$\gamma\in[0,1/2)$. Finally, if assumption (\ref{locLip1}) is
replaced by ($E_4$), then
\begin{align}
Z_n(t)f_n(t) = & \Big(X(t) - X_n(t)\Big)\Big[b(t,X(t)) - b_n(t,X_n(\kappa_n(t)))\Big]\one_{T_n}\nonumber \\
= & \Big(X(t) - X_n(\kappa_n(t))\Big)\Big[b(t,X(t)) - b(t,X_n(\kappa_n(t)))\Big]\one_{T_n}\nonumber \\
& + \Big(X(t) - X_n(\kappa_n(t))\Big)\Big[ b(t,X_n(\kappa_n(t))) - b_n(t,X_n(\kappa_n(t)))\Big]\one_{T_n}\nonumber \\
& + \Big(X_n(\kappa_n(t)) - X_n(t)\Big)\Big[b(t,X(t)) - b_n(t,X_n(\kappa_n(t)))\Big]\one_{T_n}\nonumber \\
\le & \frac{1}{2}C_R|X(t) - X_n(\kappa_n(t))|^2\one_{T_n}  \nonumber \\
& + |X(t) - X_n(\kappa_n(t))| |b(t,X_n(\kappa_n(t))) - b_n(t,X_n(\kappa_n(t)))|\one_{T_n}\nonumber \\
& + |X_n(\kappa_n(t)) - X_n(t)||b(t,X(t)) - b_n(t,X_n(\kappa_n(t)))|\one_{T_n}\nonumber \\
\le & (C_R+1)|Z_n(t)|^2 + (C_R+1)|X_n(\kappa_n(t)) -
X_n(t)|^2\one_{T_n} +
\frac{1}{2}M_{Rn}(t)  \nonumber \\
& + 2|X_n(\kappa_n(t)) - X_n(t)|\one_{T_n}\Big(L +
M_R^{1/2}(t)\Big), \nonumber
\end{align}
where $M_R$ comes from Remark \ref{Remark2}. Thus
\begin{equation} \label{Z_nf_n}
Z_n(t)f_n(t) \le L_n|Z_n(t)|^2 + \eta_n(t), \qquad \qquad \mbox{for
all } t\in [t_0,\,t_1],
\end{equation}
where $L_n:=C_R+1$ and $\eta_n$ is a non-negative
$\mathbb{F}$-adapted process which satisfy the conditions of Lemma
\ref{lemma 1.1.9.11} for any $\gamma\in[0,1/4)$ due to
(\ref{kappa}). Thus the proof is complete. \hfill $\Box$
\end{proof}

\begin{remark}
One could further observe that assumptions (\ref{locLip1}) \&
($E_2$) can be relaxed to allow
$$
|b(t,x)-b(t,y)|^2\le M_R(t)|x-y|^2 \quad \mbox{and} \quad
\sup_{|x|\le R}|b_n(t,x)|^2\le M_R(t),
$$
for some process $M_R \in \mathcal{A}$, while Theorem
\ref{TheoremEulerRateRandom} remains true.
\end{remark}

\noindent The proof of the main and final result of this section
follows.

\noindent {\it Proof of Theorem \ref{rateSDDEcase}.} We prove the
theorem by showing that
\begin{equation}                                                            \label{estimateSDDE1subinterval}
\sup_{(i-1)\tau \le t\le i\tau}|X(t) -
X_n(t)|=\mathcal{O}(n^{-\gamma}) \quad \mbox{(a.s.)}
\end{equation}
for every $i\in\{1,\ldots,N\}$. For $i=1$, the conditions of Theorem
\ref{TheoremEulerRateRandom} are satisfied and thus, estimate
(\ref{estimateSDDE1subinterval}) is achieved on the interval
$[0,\,\tau]$. Assume that estimate (\ref{estimateSDDE1subinterval})
holds for $i<N$. Then let us show that the conditions of Theorem
\ref{TheoremEulerRateRandom} for equations (\ref{random}) and
(\ref{Euler}) hold with initial data $X(t_0)=X(i\tau)$ and
$X_{n0}=X_n(i\tau)$ and with $b$, $\sigma$ and $b_n$, $\sigma_n$
given by (\ref{random_b_sigma}) and (\ref{random_b_n_sigma_n})
respectively. Clearly, (\ref{initial data}) is satisfied.
Furthermore, assumption ($E_1$) is satisfied since, for every $R>0$,
there exist a constant $c_R$ such that
$$
|b(t,x)-b(t,y)|\le c_R|x-y| \quad \mbox{(a.s.)}
$$
and
$$ |\sigma(t,x)-\sigma(t,y)|^2\le
c_R|x-y|^2 \quad \mbox{(a.s.)}
$$
for every $t\in[i\tau,\,(i+1)\tau)$ and $|x|$, $|y|<R$, due to
($A_2$). Moreover, for integers $l \ge 1$, define
$$
\Omega_l: = \{\sup_{n\ge 1}\sup_{m\tau \le t< (m+1)\tau}|Y_n(t)| \le
l\} \qquad \mbox{and} \qquad \Omega^{'}_l:= \Omega_l \backslash
\Omega_{l-1}
$$
and observe that $\mathbb{P}(\cup_l\Omega^{'}_l)=1$ due to
(\ref{estimateSDDE1subinterval}). Thus assumption ($E_2$) is
satisfied since
$$
\sup_{|x|\le R}|b_n(t,x)|= \sup_{|x|\le R}|\beta(t, Y_n(t), x)| \le
k_R\one_{\Omega_R} +
\sum_{l=R+1}^{\infty}k_l\one_{\Omega^{'}_l}<\infty \mbox{ (a.s.)}
$$
and
$$
\sup_{|x|\le R}|\sigma_n(t,x)|= \sup_{|x|\le R}|\alpha(t, Y_n(t),
x)| \le k_R\one_{\Omega_R} +
\sum_{l=R+1}^{\infty}k_l\one_{\Omega^{'}_l}<\infty \mbox{ (a.s.)},
$$
where $k_l$ are constants from ($A_1$). Finally, observe that
\begin{align}
\sup_{|x|\le R}|b(t,x) - b_n(t,x)|^2 & \le c_R |Y(t) - Y_n(t)|^2
\nonumber
\end{align}
and
\begin{align}
\sup_{|x|\le R}|\sigma(t,x) - \sigma_n(t,x)|^2 & \le c_R |Y(t) -
Y_n(t)|^2 \nonumber
\end{align}
for every $t\in[i\tau,\,(i+1)\tau)$, due to ($A_2$). Consequently,
($E_3$) holds for $\gamma<1/2$. The application of Theorem
\ref{TheoremEulerRateRandom} shows that the inductive step is
correct on $[i\tau,\,(i+1)\tau]$ which yields the result
$$
\sup_{t\le T}|X(t) - X_n(t)|=\mathcal{O}(n^{-\gamma}) \quad
\mbox{(a.s.)}
$$
for every $\gamma<1/2$. Moreover, if one replaces (\ref{LipDelay})
with ($A3$), then (\ref{estimateSDDE1}) holds for every $\gamma<1/4$
due to the fact that Theorem \ref{TheoremEulerRateRandom} applies
for the case where assumption (\ref{locLip1}) is replaced by
($E_4$).\hfill $\Box$

\end{document}